\def\marginpar#1{\ignorespaces}
\newtheorem{theorem}[equation]{Theorem}
\newtheorem{proposition}[equation]{Proposition}
\newtheorem{lemma}[equation]{Lemma}
\newtheorem{definition}[equation]{Definition}
\theoremstyle{definition}
\newtheorem{remark}[equation]{Remark}
\numberwithin{equation}{section}
\def\AArm{\fam0 \rm}%
\newdimen\AAdi%
\newbox\AAbo%
\def\AAk#1#2{\setbox\AAbo=\hbox{#2}\AAdi=\wd\AAbo\kern#1\AAdi{}}%
\newcommand{\BBone}{{\ensuremath{{\AArm 1\AAk{-.8}{I}I}}}}
\def\eqref#1{(\ref{#1})}
\def\eqlabel#1{\def\@currentlabel{#1}}
\def\formula#1{\def\@tempa{#1}\let\@tempb\theequation\def\theequation{%
\hbox{#1}}\def\@currentlabel{(\theequation)}$$}
\def\endformula{\leqno\hbox{(\@tempa)}$$\@ignoretrue\let\theequation\@tempb}
\def\given{\hskip5\p@\relax\vrule\@width.4\p@\hskip5\p@\relax}
\newcommand{\open}[1]{%
\par\normalfont\topsep6\p@\@plus6\p@\trivlist\item[\hskip\labelsep\itshape#1%
\@addpunct{.}]\ignorespaces}
\DeclareRobustCommand{\close}[1]{%
  \ifmmode 
  \else \leavevmode\unskip\penalty9999 \hbox{}\nobreak\hfill
  \fi
  \quad\hbox{$#1$}}
\newlength{\toskip}\settowidth{\toskip}{(\theequation)}
\def\<{\langle}
\def\>{\rangle}
\def \R {{\mathbb R}}
\def \E {{\mathbb E}}
\def \L {{\mathbb L}}
\begin{document}
\date{\today}

\title[Stochastic Keller-Segel]{The 2-D stochastic Keller-Segel particle model : existence and uniqueness.}

 \author[P. Cattiaux]{\textbf{\quad {Patrick} Cattiaux $^{\spadesuit}$ \, \, }}
\address{{\bf {Patrick} CATTIAUX},\\ Institut de Math\'ematiques de Toulouse. Universit\'e de Toulouse. CNRS UMR 5219. \\ 118 route de Narbonne, F-31062 Toulouse cedex 09.}
\email{patrick.cattiaux@math.univ-toulouse.fr}

\author[L. P\'ed\`eches]{\textbf{\quad {Laure} P\'ed\`eches $^{\spadesuit}$}}
\address{{\bf {Laure} PEDECHES},\\ Institut de Math\'ematiques de Toulouse. Universit\'e de Toulouse. CNRS UMR 5219. \\ 118 route de Narbonne, F-31062 Toulouse cedex 09.}
\email{laure.pedeches@math.univ-toulouse.fr}

\maketitle
 \begin{center}

 \textsc{$^{\spadesuit}$  Universit\'e de Toulouse}
\smallskip

 \end{center}

\begin{abstract}
We introduce a stochastic system of interacting particles which is expected to furnish as the number of particles goes to infinity a stochastic approach of the $2$-D Keller-Segel model. In this note, we prove existence and some uniqueness for the stochastic model for the parabolic-elliptic Keller-Segel equation, for all regimes under the critical mass. Prior results for existence and weak uniqueness have been very recently obtained by N. Fournier and B. Jourdain \cite{FJ}.
\end{abstract}
\bigskip

\textit{ Key words : Keller-Segel model, diffusion processes, Bessel processes.}  
\bigskip

\textit{ MSC 2010 : 35Q92, 60J60, 60K35.}
\bigskip

\section{\bf Introduction and main results.}\label{Intro}

The (Patlak) Keller-Segel system is a tentative model to describe chemo-taxis phenomenon, an attractive chemical phenomenon between organisms. In two dimensions, the classical 2-D parabolic-elliptic Keller-Segel model reduces to a single non linear P.D.E., 
\begin{equation}\label{eqKS}
\partial_t \rho_t(x) = \Delta_x \, \rho_t(x) + \chi \, \nabla_x.((K*\rho_t)\rho_t)(x) 
\end{equation}
with some initial $\rho_0$.\\ Here $\rho:\mathbb R_+ \times \mathbb R^2 \, \to \, \mathbb R$, $\chi>0$ and $K:x \in \mathbb R^2 \, \mapsto \, \frac{x}{\parallel x\parallel^2} \, \in \mathbb R^2$ is the gradient of the harmonic kernel, i.e. $K(x) = \nabla \, \log(\parallel x\parallel)$.\\ It is not difficult to see that \eqref{eqKS} preserves positivity and mass, so that we may assume that $\rho_0$ is a density of probability, i.e. $\rho_0\geq 0$ and $\int \rho_t dx = \int \rho_0 dx =1$. For an easy comparison with the P.D.E. literature, just remember that our parameter $\chi$ satisfies $\chi = \frac{m}{2\pi}$ when $m$ denotes the total mass of $\rho$ (the parameter $2\pi$ being usually included in the definition of $K$).\\ As usual, $\rho$ is modeling a density of cells, and $c_t=K*\rho_t$ is (up to some constant) the concentration of chemo-attractant. 
\medskip

A very interesting property of such an equation is a blow-up phenomenon
\begin{theorem}\label{blow-up}
Assume that $\rho_0 \, \log \rho_0 \in \mathbb L^1(\mathbb R^2)$ and that $(1+\parallel x\parallel^2) \rho_0  \in \mathbb L^1(\mathbb R^2)$. Then if $\chi >4$, the maximal time interval of existence of a classical solution of \eqref{eqKS} is $[0,T^*)$ with $$T^* \, \leq \, \frac{1}{2\pi \, \chi \, (\chi-4)} \, \int \parallel x\parallel^2 \, \rho_0(x) \, dx \, .$$ If $\chi \leq 4$ then $T^*=+\infty$.
\end{theorem}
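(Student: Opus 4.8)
The plan is to run the classical \emph{virial} (second-moment) argument. Set
$$M(t) \, = \, \int_{\R^2} \|x\|^2 \, \rho_t(x) \, dx \, ,$$
which is finite at $t=0$ by the hypothesis $(1+\|x\|^2)\rho_0 \in \L^1(\R^2)$. First I would establish that $M$ stays finite and is differentiable on the interval of existence (propagation of the second moment), and then differentiate under the integral sign using \eqref{eqKS}:
$$M'(t) \, = \, \int_{\R^2} \|x\|^2 \, \Delta_x \rho_t \, dx \; + \; \chi \int_{\R^2} \|x\|^2 \, \nabla_x\!\cdot\!\big((K*\rho_t)\rho_t\big) \, dx \, .$$

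For the diffusion term, two integrations by parts give $\int \Delta(\|x\|^2)\,\rho_t = 4\int\rho_t = 4$, using $\Delta\|x\|^2 = 4$ in dimension two together with conservation of mass. For the interaction term, one integration by parts yields $-2\chi\int x\cdot(K*\rho_t)\,\rho_t\,dx$; writing this as a double integral against $\rho_t\otimes\rho_t$ and symmetrizing in the two variables, the kernel $x\cdot K(x-y)$ combines with its transpose into
$$(x-y)\cdot\frac{x-y}{\|x-y\|^2} \, = \, 1 \, ,$$
so that the singular factor cancels and the double integral collapses to $\iint\rho_t(x)\rho_t(y)\,dx\,dy = 1$. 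Collecting the two contributions gives the constant derivative $M'(t) = 4 - \chi$, so $M$ is affine with slope \emph{strictly negative precisely when} $\chi>4$, the threshold $\chi=4$ being exactly the critical value $m=8\pi$.

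Consequently, for $\chi>4$ one has $M(t) = M(0) - (\chi-4)\,t$; since $M(t)\ge 0$ for any genuine probability density, a classical solution cannot persist beyond the time at which this affine bound reaches $0$, which yields the announced estimate on $T^*$ once the precise normalization constants are tracked. When $\chi\le 4$ the slope is nonnegative, this obstruction disappears, and one concludes $T^*=+\infty$ by invoking the known subcritical global existence theory.

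The main obstacle is \emph{rigour in the manipulations rather than the algebra}. One must justify that $M(t)<\infty$ and that $t\mapsto M(t)$ is $C^1$, control the boundary terms at infinity in the integrations by parts (via the moment bound and the decay of $\rho_t$), and handle the diagonal singularity $x=y$ in the double integral: each summand behaves only like $\|x-y\|^{-1}$ near the diagonal, hence is locally integrable in $\R^2$, and the symmetrization --- which is where the singular factor cancels --- should be carried out through a truncation and limiting procedure rather than purely formally. The global existence for $\chi\le 4$ is a genuinely separate and deeper input (free-energy and entropy methods) that I would cite rather than reprove here.
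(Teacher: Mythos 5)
The paper never proves Theorem \ref{blow-up}: it is quoted as background and delegated to Blanchet's HDR \cite{Adrien}, so there is no internal proof to compare against. Your virial argument is exactly the standard literature proof of the blow-up half, and its skeleton is sound: the two integrations by parts, the symmetrization in $(x,y)$ that replaces the singular kernel $x\cdot K(x-y)$ by the constant $1$, and the rigour caveats you list (propagation and differentiability of the second moment, boundary terms, truncation near the diagonal) are the right ones for a classical solution. Citing rather than reproving global existence for $\chi\leq 4$ is also legitimate — it is what the paper itself does — though note that the critical case $\chi=4$ is not covered by ``subcritical global existence theory''; it requires the substantially harder critical-mass result, so your citation should be split accordingly.

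The one point you cannot wave away as ``tracking normalization constants'' is the constant itself. Under the paper's own convention ($\int\rho_0=1$, $K(x)=x/\parallel x\parallel^2$ with no $1/2\pi$, parameter $\chi$ in front of the interaction), your computation correctly gives $M'(t)=4-\chi$, hence $T^*\leq \frac{1}{\chi-4}\int \parallel x\parallel^2\rho_0(x)\,dx$. The bound stated in the theorem, with constant $\frac{1}{2\pi\chi(\chi-4)}$, is the one produced by the PDE-literature convention, in which $\rho$ carries total mass $m=2\pi\chi$ and the kernel is $\frac{1}{2\pi}\frac{x}{\parallel x\parallel^2}$: there the computation gives $M'(t)=4m-\frac{m^2}{2\pi}=-2\pi\chi(\chi-4)$. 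The two bounds differ by the factor $2\pi\chi$, and no bookkeeping inside your argument will convert one into the other; the discrepancy is an inconsistency between the theorem's statement (imported from the literature) and the probability normalization the paper fixes just before it. Your closing claim that the announced estimate follows ``once the precise normalization constants are tracked'' therefore hides a real mismatch rather than resolving it: either carry out the computation in the mass-$m$ convention and obtain exactly the stated constant, or state the bound that actually corresponds to the normalization of \eqref{eqKS}, namely $T^*\leq\frac{1}{\chi-4}\int \parallel x\parallel^2\rho_0(x)\,dx$.
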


For this result, a wonderful presentation of what Keller-Segel models are and an almost up to date state of the art, we refer to the unpublished HDR document of Adrien Blanchet (available on Adrien's webpage   \cite{Adrien}). We also apologize for not furnishing a more complete list of references on the topic, where beautiful results were obtained by brilliant mathematicians. But the present paper is intended to be a short note.
\medskip

Actually \eqref{eqKS} is nothing else but a Mc Kean-Vlasov type equation (non linear Fokker-Planck equation if one prefers), involving a potential which is singular at $0$. Hence one can expect that the movement of a  typical cell will be given by a non-linear diffusion process
\begin{eqnarray}\label{eqnldiff}
dX_t &=& \sqrt 2 \, dB_t \, - \, \chi \, (K*\rho_t)(X_t) \, dt \, ,\\
\rho_t(x) \, dx &=& \mathcal L(X_t) \, , \nonumber
\end{eqnarray}
where $\mathcal L(X_t)$ denotes the distribution of probability of $X_t$. The natural linearization of \eqref{eqnldiff} is through the limit of a linear system of stochastic differential equations in mean field interactions given for $i=1,...,N$ by
\begin{equation}\label{eqsys}
dX_t^{i,N} = \sqrt 2 \, dB_t^{i,N} \, - \, \frac {\chi}{N} \, \sum_{j\neq i}^N \, \frac{X_t^{i,N}-X_t^{j,N}}{\parallel X_t^{i,N}-X_t^{j,N} \parallel^2} \, \, dt \, ,
\end{equation}
for a well chosen initial distribution of the $X_0^{.,N}$. Here the $B_.^{i,N}$ are for each $N$ independent standard 2-D Brownian motions. Under some exchangeability assumptions, it is expected that the distribution of any particle (say $X^{1,N}$) converges to a solution of \eqref{eqnldiff} as $N \to \infty$, yielding a solution to \eqref{eqKS}. This strategy (including the celebrated propagation of chaos phenomenon) has been well known for a long time. One can see \cite{Mel} for bounded and Lipschitz potentials, \cite{malrieu01,CGM} for unbounded potentials connected with the granular media equation.
\medskip

The goal of the present note is the study of existence, uniqueness and non explosion for the system \eqref{eqsys}. That is, this is the very first step of the whole program we have described previously. Moreover we will see how the $N$-particle system is feeling the blow-up property of the Keller-Segel equation. 
\medskip

For such singular potentials very few is known. Fournier, Hauray and Mischler \cite{FHM} have tackled the case of the 2-D viscous vortex model, corresponding to $K(x)=\frac{x^\perp}{\parallel x\parallel^2}$ for which no blow-up phenomenon occurs. In the same spirit the sub-critical Keller-Segel model corresponding to $K(x)=\frac{x}{\parallel x\parallel^{2-\varepsilon}}$ for some $\varepsilon >0$ is studied in \cite{GQ}. The methods of both papers are close, and mainly based on some entropic controls. These methods seem to fail for the classical Keller-Segel model we are looking at. However, during the preparation of the manuscript, we received the paper by N. Fournier and B. Jourdain \cite{FJ}, who prove existence and some weak uniqueness by using approximations. Though some intermediate results are the same, we shall here give a very different and much direct approach, at least for existence and some uniqueness. However, we shall use one result in \cite{FJ} to prove a more general uniqueness result.

Also notice that when we replace the attractive potential $K$ by a repulsive one (say $-K$), we find models connected with random matrix theory (like the Dyson Brownian motion).
\bigskip

Our main theorem in this paper is the following

\begin{theorem}\label{thmmain}
Let $M=\{\textrm{there exists at most one pair $i \neq j$ such that $X^i=X^j$}\}$. Then,  
\begin{itemize}
\item for $N \geq 4$ and $\chi < 4 \, \left(1 - \frac{1}{N-1}\right)$, there exists a unique (in distribution) non explosive solution of \eqref{eqsys}, starting from any $x \in M$. Moreover, the process is strong Markov, lives in $M$ and admits a symmetric $\sigma$-finite, invariant measure given by $$\mu(dX^1,...,dX^N) = \Pi_{1\leq i<j \leq N} \, \parallel X^i-X^j\parallel^{-\frac \chi N} \, dX^1 ... dX^N \, ,$$
\item for $N\geq 2$, if $\chi > 4$, the system \eqref{eqsys} does not admit any global solution (i.e. defined on the whole time interval $\mathbb R^+$),
\item for $N\geq 2$, if $\chi = 4$, either the system \eqref{eqsys} explodes  or the $N$ particles are glued in finite time.
\end{itemize}
\end{theorem}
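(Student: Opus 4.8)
The plan is to exploit the gradient structure of \eqref{eqsys}. Writing $W(X)=\sum_{1\le i<j\le N}\log\|X^i-X^j\|$, the drift of the $i$-th particle is $-\frac{\chi}{N}\nabla_{X^i}W$, so the generator $\sum_i\Delta_{X^i}-\frac{\chi}{N}\nabla W\cdot\nabla$ is symmetric in $L^2(\mu)$ for
\[
\mu(dX)=e^{-\frac{\chi}{N}W(X)}\,dX=\prod_{1\le i<j\le N}\|X^i-X^j\|^{-\chi/N}\,dX.
\]
This already identifies the candidate symmetric invariant measure and shows it is only $\sigma$-finite (it is translation invariant in the centre of mass). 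First I would solve \eqref{eqsys} classically up to the first collision time, where the drift is smooth; the whole difficulty is then to understand collisions and to continue the process. The key reduction is that near a collision the system is comparable to a Bessel process, which I would make quantitative as follows.

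For a subset $A\subset\{1,\dots,N\}$ of $k=|A|\ge2$ particles, set $R_A^2=\sum_{i\in A}\|X^i-\bar X_A\|^2$ with $\bar X_A$ the centre of mass of $A$. A direct It\^o computation, using the antisymmetry identity $\sum_{i\ne j\in A}\frac{(X^i-X^j)\cdot(X^i-X^j)}{\|X^i-X^j\|^2}=k(k-1)$, shows that the singular part of the drift of $R_A^2$ collapses to the constant $(k-1)(4-\chi k/N)$, while the martingale part has bracket $8R_A^2\,dt$ plus contributions from particles outside $A$ that stay bounded as the cluster shrinks. Hence $R_A^2$ behaves, up to a time change, like a squared Bessel process of dimension
\[
\delta_k=\frac{k-1}{2}\Big(4-\frac{\chi k}{N}\Big).
\]
For $k=2$ one gets $\delta_2=2-\chi/N\in(0,2)$: two particles do collide, but $0$ is reflecting for such a Bessel process, and the symmetric process passes through the single-pair diagonal uniquely — this is what lets the solution live in $M$. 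For $k\ge3$ one has $\delta_k\ge2$ exactly when $\chi\le\frac{4N(k-2)}{k(k-1)}$, and minimizing the right-hand side over $3\le k\le N$ gives the threshold $\frac{4(N-2)}{N-1}=4(1-\frac{1}{N-1})$, attained at $k=N$. Thus under the stated condition every cluster of three or more particles, as well as (by adding the dimensions of two comparable Bessel processes) any two disjoint colliding pairs, has effective dimension $\ge2$ and is therefore polar: the process never leaves $M$. Non-explosion follows from the same picture applied to $k=N$, since $V:=\sum_{i<j}\|X^i-X^j\|^2=N R_{\{1,\dots,N\}}^2$ is a squared Bessel process of dimension $\delta_N>2$, which neither reaches $0$ nor explodes on finite intervals, while the centre of mass is a free Brownian motion; constructing the process as the Hunt diffusion of the Dirichlet form $\mathcal E(f,f)=\int\|\nabla f\|^2\,d\mu$ then yields the strong Markov property and the invariance of $\mu$.

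The second and third items rest on a single virial identity. Summing the above with $A=\{1,\dots,N\}$, all singular terms cancel and
\[
dV_t=dM_t+N(N-1)(4-\chi)\,dt,\qquad d\langle M\rangle_t=8N\,V_t\,dt,
\]
so $V$ is a time-changed squared Bessel process of dimension $\delta_N=\frac{N-1}{2}(4-\chi)$. If $\chi>4$ the drift is a strictly negative constant, so for any hypothetical global solution localization gives $\E[V_t]=V_0+N(N-1)(4-\chi)t$, which becomes negative and contradicts $V_t\ge0$; hence the system explodes in finite time (this is the particle analogue of Theorem \ref{blow-up}, including its quantitative blow-up time). If $\chi=4$ the drift vanishes and $V$ is a nonnegative local martingale, i.e.\ a time-changed squared Bessel process of dimension $0$; such a process is absorbed at $0$ and reaches it in finite time almost surely, so either the solution explodes beforehand or all $N$ particles are glued together in finite time.

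The main obstacle is the first item: turning the Bessel heuristics into a rigorous construction and uniqueness statement. Making sense of the solution through the single-pair collisions (where the drift is not integrable against an arbitrary semimartingale) and proving that the codimension-$2$ diagonals are regular while the higher collision sets are genuinely polar requires either a careful Bessel/McKean-type excursion argument or the fine theory of the Dirichlet form $\mathcal E$ (closability, and identification of its associated process with a solution of \eqref{eqsys}). Uniqueness in distribution is the most delicate point, since weak uniqueness for such singular drifts does not follow from the symmetric structure alone; here I would invoke the existence and approximation result of Fournier and Jourdain \cite{FJ} to pin down the law and thereby upgrade the Dirichlet-form process to \emph{the} unique non-explosive solution.
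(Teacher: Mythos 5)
Your plan follows the paper's proof almost step for step: your $R_A^2$ is, up to the constant factor $k$, the paper's $Z^k$; your dimension $\delta_k=\frac{k-1}{2}\bigl(4-\frac{\chi k}{N}\bigr)$ and the minimization over $3\le k\le N$ reproduce \eqref{eqdist2} and Lemma \ref{lemk}; the measure $\mu$, the Dirichlet form $\mathcal E$, and the appeal to \cite{FJ} are exactly the paper's ingredients; and for $\chi>4$ and $\chi=4$ your virial identity is the paper's observation that $Z^N$ is (a time change of) a squared Bessel process of dimension $\frac{N-1}{2}(4-\chi)$ --- your expectation/localization argument is a legitimate, even quantitative, variant of invoking Proposition \ref{propbes}, and your treatment of two disjoint colliding pairs by adding dimensions matches the paper's computation for $\|D^{1,2}\|^2+\|D^{3,4}\|^2$.

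There remain, however, genuine gaps in the first item, which you have partly flagged yourself but not closed. First, the sentence ``non-explosion follows from the same picture applied to $k=N$'' is wrong as stated: under $\chi<4\bigl(1-\frac{1}{N-1}\bigr)$ pair collisions \emph{do} occur (since $\delta_2=2-\frac{\chi}{N}\in(0,2)$), and at such a time the classical solution ceases to make sense because the drift is singular there; reflection of the radial part $\|X^1_t-X^2_t\|$ does not determine the $\mathbb R^2$-valued process $X^1-X^2$ through the collision, so the lifetime of the maximal strong solution ends precisely on $\delta M$. Non-explosion is therefore not a consequence of the $k=N$ Bessel picture at all; it requires constructing a solution started from every point of $\delta M$, which is the entire content of the paper's Section 3: closability of the form, identification of the Hunt process with a solution of \eqref{eqsys} via Lemma \ref{lemcoord} (nontrivial, since the coordinate functions do not belong to the generator's domain), the occupation-time Lemma \ref{lemlocal}, and the removal of the exceptional polar set so as to start from \emph{every} $x\in M$ rather than quasi-every $x$. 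Second, the result of \cite{FJ} you propose to invoke is not the right one: their existence-by-approximation theorem produces a (possibly different) weak solution and cannot ``pin down the law''; what is actually needed, and what the paper uses, is their uniqueness result for two-dimensional skew Bessel processes (their Lemma 19 and Theorem 17), applied after localizing in a neighborhood $\Omega_y$ of $\delta M$ and performing a Girsanov transform that removes the smooth part of the drift, combined with Lemma \ref{lemlocal}. Until these steps are carried out, the first item (existence, uniqueness in distribution from every $x\in M$, and the strong Markov property) is asserted rather than proved.
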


Since later we will be interested in the limit $N \to +\infty$, this theorem is in a sense optimal: for $\chi <4$ we have no asymptotic explosion while for $\chi > 4$ the system explodes. Also notice that in the limiting case $\chi=4$, we have (at least) explosion for the density of the stochastic system and not for the equation \eqref{eqKS}.

The proof of this theorem is partly ``pathwise'', based on comparisons between one dimensional diffusion processes and the behavior of squared Bessel processes, partly based on Dirichlet forms theory and partly based on an uniqueness result for $2$ dimensional skew Bessel processes obtained in \cite{FJ}. The latter is only used to get rid of a non allowed polar set of starting positions which appears when using Dirichlet forms. 

The remaining part of the whole program will be the aim of future works.
\medskip

\section{Study of the system \eqref{eqsys}.} \label{secproofs}

Most of the proofs in this section will use comparison with squared Bessel processes. Let us recall some basic results on these processes.
\begin{definition}\label{defbes}
Let $\delta \in \mathbb R$. The unique strong solution (up to some explosion time $\tau$) of the following one dimensional stochastic integral equation $$Z_t = z + 2 \, \int_0^t \, \sqrt{Z_s} \, dB_s + \delta \, t \, ,$$ is called the (generalized) squared Bessel process of dimension $\delta$ starting from $z\geq 0$.
\end{definition}
In general squared Bessel processes are only defined for $\delta \geq 0$, that is why we used the word generalized in the previous definition. For these processes the following properties are known
\begin{proposition}\label{propbes}
Let $Z$ be a generalized squared Bessel process of dimension $\delta$. Let $\tau_0$ the first hitting time of the origin. 
\begin{itemize}
\item If $\delta <0$, then $\tau_0$ is almost surely finite and equal to the explosion time,
\item if $\delta=0$, then $\tau_0 <+\infty$ and $Z_t=0$ for all $t\geq \tau_0$ almost surely, 
\item if $0<\delta<2$, then $\tau_0 <+\infty$ almost surely and the origin is instantaneously reflecting,
\item if $\delta \geq 2$, then the origin is polar (hence $\tau_0=+\infty$ almost surely).
\end{itemize}
\end{proposition}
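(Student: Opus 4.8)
The plan is to treat $Z$ as a regular one-dimensional diffusion on $(0,\infty)$ and to read off every assertion from its scale function, its speed measure, and Feller's boundary classification at the origin. From the equation in Definition~\ref{defbes} the generator is $L = 2x\,\frac{d^2}{dx^2} + \delta\,\frac{d}{dx}$, so the scale density solves $2x\,s''+\delta\,s'=0$, giving $s'(x)=x^{-\delta/2}$ and hence
\[
s(x)=\frac{x^{1-\delta/2}}{1-\delta/2}\ (\delta\neq 2),\qquad s(x)=\log x\ (\delta=2),
\]
while the speed density is $m'(x)=\tfrac12\,x^{\delta/2-1}$. Two elementary features of $s$ will be used repeatedly: $s(0^+)$ is finite when $\delta<2$ and equals $-\infty$ when $\delta\ge 2$, and $s(+\infty)=+\infty$ for every $\delta<2$, so that escape to $+\infty$ has probability zero and the origin is the only possible exit in scale.

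First I would settle the dichotomy $\delta\ge 2$ versus $\delta<2$ via Feller's accessibility test, i.e. by deciding the finiteness of $v(0^+)=\int_0^c s'(y)\big(\int_y^c m'(z)\,dz\big)\,dy$ for a fixed $c>0$. A direct computation shows $v(0^+)<\infty$ precisely when $\delta<2$: for $\delta>0$ the inner integral stays bounded as $y\downarrow0$ and the outer one reduces to $\int_0 y^{-\delta/2}\,dy$, convergent iff $\delta<2$, and the cases $\delta\le 0$ are handled the same way. Thus for $\delta\ge 2$ the origin is inaccessible, hence polar, and $\tau_0=+\infty$ almost surely, which is the last bullet. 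For $\delta<2$ the origin is accessible; combined with $s(+\infty)=+\infty$ and $s(0^+)$ finite, the standard scale identity $\P_x(\tau_0<\tau_r)=\frac{s(r)-s(x)}{s(r)-s(0)}\to 1$ as $r\to\infty$ gives $\tau_0<+\infty$ almost surely, which is the finiteness part of the first three bullets.

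It then remains to describe the finer behaviour at the origin among the accessible regimes. For $0<\delta<2$ the speed measure has no atom at $0$ (since $\int_0 x^{\delta/2-1}\,dx<\infty$) and $s(0^+)$ is finite, so $0$ is a regular boundary and the strong solution is instantaneously reflecting there; instantaneity follows because, by the occupation-time formula expressed through the speed measure, the Lebesgue time spent at $0$ equals $L^0_t\,m(\{0\})=0$. For $\delta=0$ one has $s(x)=x$, so $Z$ is itself a nonnegative continuous local martingale; it hits $0$ in finite time by the previous scale argument, and once there both coefficients $2\sqrt{Z}$ and $\delta$ vanish, so $Z\equiv0$ is the unique continuation and the origin is absorbing, giving the second bullet. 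For $\delta<0$ I would first obtain $\tau_0<\infty$ by the pathwise comparison $Z^{\delta}_t\le Z^{0}_t$ (same Brownian motion, smaller drift) with the $\delta=0$ process, and then note that at the instant $Z$ reaches $0$ the strictly negative drift $\delta\,dt$ would force $Z$ below $0$, where $\sqrt{Z}$ is undefined: no nonnegative continuation exists, so $\tau_0$ coincides with the explosion time, which is the first bullet.

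The main obstacle is exactly this last step: the scale/speed bookkeeping cleanly produces the accessibility split at $\delta=2$ and the hitting times, but the qualitative nature of the boundary — instantaneous reflection for $0<\delta<2$, genuine absorption for $\delta=0$, and killing/explosion for $\delta<0$ — lies beyond it. Establishing instantaneous reflection rigorously requires the semimartingale occupation-time formula together with the absence of a speed-measure atom at $0$, and identifying $\tau_0$ with the explosion time for $\delta<0$ requires the pathwise comparison and the non-existence of a nonnegative continuation; these are the delicate points where one must argue pathwise rather than by boundary-type tables.
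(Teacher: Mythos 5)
Your scale and speed computations are correct ($s'(x)=x^{-\delta/2}$, $m'(x)=\tfrac12 x^{\delta/2-1}$), and note that the paper itself offers no proof of this proposition: it simply cites \cite{RY}, Chapter XI, Proposition 1.5, so the comparison is really with the standard argument found there. Your handling of the dichotomy $\delta\ge 2$ versus $\delta<2$ (Feller accessibility via $v(0^+)$, plus $s(+\infty)=+\infty$ and the scale identity) is sound and is essentially how the standard proof gets polarity in one case and $\tau_0<\infty$ a.s. in the other. Your $\delta=0$ case is also clean: the zero process solves the equation from the origin, and Yamada--Watanabe pathwise uniqueness (applicable since $z\mapsto 2\sqrt z$ is H\"older of order $1/2$) makes it the only continuation.

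There is, however, a genuine circularity in the instantaneous-reflection step for $0<\delta<2$. You justify ``the speed measure has no atom at $0$'' by the integrability $\int_0 x^{\delta/2-1}\,dx<\infty$. That integrability concerns the speed \emph{density} on the interior and enters the regular/exit classification; it says nothing about a point mass at $0$. At a regular boundary the stickiness parameter $m(\{0\})$ is a free parameter, not determined by the interior scale/speed data: sticky squared Bessel processes with the same interior characteristics and $m(\{0\})>0$ exist (they just solve a different equation). So asserting $m(\{0\})=0$ is exactly equivalent to the instantaneous reflection you want, and your identity $\int_0^t \BBone_{\{Z_s=0\}}\,ds = L^0_t\, m(\{0\})$ assumes the conclusion. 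The missing ingredient must come from the SDE itself. For instance, since $Z\ge 0$, semimartingale local-time calculus gives $\int_0^t \BBone_{\{Z_s=0\}}\,dZ_s=\delta\int_0^t\BBone_{\{Z_s=0\}}\,ds=\tfrac12 L^0_t(Z)$, and $L^0_t(Z)=0$ because the occupation formula $\int_{(0,\infty)}\frac{L^a_t}{4a}\,da=\int_0^t\BBone_{\{Z_s>0\}}\,ds\le t$ combined with right-continuity of $a\mapsto L^a_t$ forbids $L^0_t>0$; hence the zero set has null Lebesgue measure. Alternatively, the explicit Laplace transform $\E_x\left[e^{-\lambda Z_t}\right]=(1+2\lambda t)^{-\delta/2}\,e^{-\lambda x/(1+2\lambda t)}$ yields $\P_x(Z_t=0)=0$ for $\delta>0$, and Fubini concludes; this is essentially the route in \cite{RY}, whose zero-occupation-time statement the paper reuses later in Lemma \ref{lemlocal}. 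A smaller gap of the same nature occurs for $\delta<0$: ``the drift would force $Z$ below $0$'' is heuristic; to rule out a nonnegative continuation past $\tau_0$, localize with $T_n=\inf\{u : Z_{\tau_0+u}\ge n\}$ and take expectations to get $\E\left[Z_{\tau_0+(u\wedge T_n)}\right]=\delta\,\E[u\wedge T_n]<0$ for $u>0$, contradicting nonnegativity.
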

For all this see \cite{RY} chapter XI, proposition 1.5. 
\bigskip

Now come back to \eqref{eqsys}. For simplicity we skip the index $N$ in the definition of the process. \\Since all coefficients are locally Lipschitz outside the set $$A=\left\{\textrm{there exists (at least) a pair $i\neq j$ such that $X^i=X^j$}\right\}$$ and bounded when the distance to $A$ is bounded from below, the only problem is the one of collisions between particles. As usual we denote by $\xi$ the lifetime of the process. For simplicity we also assume, for the moment, that the starting point does not belong to $A$, so that the lifetime is almost surely positive.

For $2\leq k \leq N$ we define $K=\{1,...,k\}$ and $\bar K^2=\{(i,j) \in K^2 | i \neq j \}$. We shall say that a $k$-collision occurs at (a random) time $T$ if $X_T^i=X_T^j$ for all $(i,j) \in \bar K^2$, $X_T^l \neq X_T^i$ for all $l > k$. Of course, there is no lack of generality when looking at the first $k$ indices, and we can also assume that at this peculiar time $T$, any other collision involves at most $k$ other particles.\\ In what follows we denote $D^{i,j}=X^i-X^j$, and $$Z^k = \sum_{(i,j)\in \bar K^2} \, \parallel D^{i,j}\parallel^2 \, .$$ Of course a $k$-collision occurs at time $T$ if and only if $T<\xi$ and $Z^k_T=0$.
\medskip

Let us study the process $Z^k$. Applying Ito's formula we get on $t<\xi$
\begin{eqnarray}\label{eqito}
Z^k_t &=& Z^k_0 + 2\sqrt 2 \, \int_0^t \, \sum_{(i,j)\in \bar K^2} \, D^{i,j}_s \, (dB_s^i - dB_s^j) + 4k(k-1) \left(2-\frac \chi N \right)t \\ & & \, - \, \frac{2\chi}{N} \, \int_0^t \, \sum_{(i,j)\in \bar K^2} \, D^{i,j}_s \, \sum \limits_{\underset{l \neq i,j}{l=1}}^N \left(\frac{D^{i,l}_s}{\parallel D^{i,l}_s\parallel^2} + \frac{D^{l,j}_s}{\parallel D^{l,j}_s\parallel^2}\right) \, ds \, . \nonumber
\end{eqnarray}
We denote
\begin{eqnarray*}
dM^k_s &=& \sum_{(i,j)\in \bar K^2} \, D^{i,j}_s \, (dB_s^i - dB_s^j) \\
E^k_s &=& \sum_{(i,j)\in \bar K^2} \, D^{i,j}_s \, \sum \limits_{\underset{l \neq i,j}{l=1}}^N \left(\frac{D^{i,l}_s}{\parallel D^{i,l}_s\parallel^2} + \frac{D^{l,j}_s}{\parallel D^{l,j}_s\parallel^2}\right) \, 
\end{eqnarray*}
the martingale part and the non-constant drift part.
\medskip

\subsection{Investigation of the martingale part $M^k$.} \quad Let us compute the martingale bracket, using the immediate $D^{i,l}=-D^{l,i}$ and $D^{i,l}+D^{l,j}=D^{i,j}$. 

\begin{eqnarray*}
d<M^k>_s &=& \sum \limits_{\underset {(l,m)\in \bar K^2}{(i,j)\in \bar K^2}} \, <D^{i,j}_s \, (dB_s^i - dB_s^j),D^{l,m}_s \, (dB_s^l - dB_s^m)> \\ &=& \sum \limits_{\underset {(l,m)\in \bar K^2}{(i,j)\in \bar K^2}} \, D^{i,j}_s \, D^{l,m}_s \, (\delta_{il} - \delta_{im} - \delta_{jl} + \delta_{jm}) \, ds \\ &=& \sum_{(i,j)\in \bar K^2} \, D^{i,j}_s \, \left(\sum \limits_{\underset{m \neq i}{m \in K}} D^{i,m}_s +\sum \limits_{\underset{l \neq i}{l \in K}} D^{i,l}_s + \sum \limits_{\underset{l \neq j}{l \in K}} D^{l,j}_s + \sum \limits_{\underset{m \neq j}{m \in K}} D^{m,j}_s\right) ds \\ &=& 2 \, \sum_{(i,j)\in \bar K^2} \, D^{i,j}_s \, \left(\sum \limits_{\underset{m \neq i}{m \in K}} D^{i,m}_s + \sum \limits_{\underset{m \neq j}{m \in K}} D^{m,j}_s\right) ds \\ &=& 2k \, \sum_{(i,j)\in \bar K^2} \, \parallel D^{i,j}_s \parallel^2
\end{eqnarray*}
i.e. finally
\begin{equation}\label{martin}
d<M^k>_s = 2k \, Z_s^K \, ds \, .
\end{equation}
According to Doob's representation theorem (applied to $\BBone_{s<\xi} \, d<M^k>_s$), there exists (on an extension of the initial probability space) a one dimensional Brownian motion $W^k$ such that almost surely for $t<\xi$
\begin{equation}\label{eqdoob}
2 \sqrt 2 \, \int_0^t \, \sum_{(i,j)\in \bar K^2} \, D^{i,j}_s \, (dB_s^i - dB_s^j) \, = \, 4\sqrt k \, \int_0^t \, \sqrt{Z^k_s} \, dW_s^k \, .
\end{equation}
\medskip

\subsection{Reduction of the drift term.} \quad In order to study the drift term $E^k_t$ we will divide it into two sums: the first one, $C_t^k$ taking into consideration the $i$ and $j$ in $K$, i.e. the pair of particles which will be directly involved in the eventual $k$-collision; the other one $R_t^k$, involving the remaining indices.

More precisely $E_t^k=C_t^k+R_t^k$ with
\begin{eqnarray*}
C_t^k &=& \sum_{(i,j)\in \bar K^2} \,  \sum \limits_{\underset{l \neq i,j}{l \in K}} \, D^{i,j}_t \, \left(\frac{D^{i,l}_t}{\parallel D^{i,l}_t\parallel^2} + \frac{D^{l,j}_t}{\parallel D^{l,j}_t\parallel^2}\right) \, , \\ R_t^k &=& \sum_{(i,j)\in \bar K^2} \,  \sum_{l=k+1}^N \, D^{i,j}_t \, \left(\frac{D^{i,l}_t}{\parallel D^{i,l}_t\parallel^2} + \frac{D^{l,j}_t}{\parallel D^{l,j}_t\parallel^2}\right) \, .
\end{eqnarray*}
We will deal with $R_t^k$ later. First we ought to simplify the expression of $C_t^k$. Indeed
\begin{eqnarray*}
C_t^k &=& \sum_{(i,j)\in \bar K^2} \,  \sum \limits_{\underset{l \neq i,j}{l \in K}} \, D^{i,j}_t \, \left(\frac{D^{i,l}_t}{\parallel D^{i,l}_t\parallel^2} + \frac{D^{l,j}_t}{\parallel D^{l,j}_t\parallel^2}\right) \, \\ &=& \sum_{(i,l)\in \bar K^2} \, \left(\frac{D^{i,l}_t}{\parallel D^{i,l}_t\parallel^2} \, \sum \limits_{\underset{j \neq i,l}{j \in K}} \, D^{i,j}_t\right) + \sum_{(j,l)\in \bar K^2} \, \left(\frac{D^{l,j}_t}{\parallel D^{l,j}_t\parallel^2} \, \sum \limits_{\underset{i \neq j,l}{i \in K}} D^{i,j}_t\right)\\ &=& \sum \limits_{\underset {i<l}{(i,l)\in \bar K^2}} \, \left(\frac{D^{i,l}_t}{\parallel D^{i,l}_t\parallel^2} \, \sum \limits_{\underset{j \neq i,l}{j \in K}} \, D^{i,j}_t + \frac{D^{l,i}_t}{\parallel D^{l,i}_t\parallel^2} \, \sum \limits_{\underset{j \neq i,l}{j \in K}} \, D^{l,j}_t\right) \, + \\ & & \quad \quad + \, \sum \limits_{\underset {j<l}{(j,l)\in \bar K^2}} \, \left(\frac{D^{l,j}_t}{\parallel D^{l,j}_t\parallel^2} \, \sum \limits_{\underset{i \neq j,l}{i \in K}} \, D^{i,j}_t + \frac{D^{j,l}_t}{\parallel D^{j,l}_t\parallel^2} \, \sum \limits_{\underset{i \neq j,l}{j \in K}} \, D^{i,l}_t\right)
\end{eqnarray*}
so that using again $D^{l,i} D^{l,j} = D^{i,l} D^{j,l}$ the latter is still equal to
\begin{eqnarray*}
&=& \sum \limits_{\underset {i<l}{(i,l)\in \bar K^2}} \, \frac{D^{i,l}_t}{\parallel D^{i,l}_t\parallel^2} \, \left(\sum \limits_{\underset{j \neq i,l}{j \in K}} \, (D^{i,j}_t+D^{j,l}_t)\right) + \sum \limits_{\underset {j<l}{(j,l)\in \bar K^2}} \, \frac{D^{l,j}_t}{\parallel D^{i,l}_t\parallel^2} \, \left(\sum \limits_{\underset{i \neq j,l}{i \in K}} \, (D^{i,j}_t+D^{l,i}_t)\right)
\end{eqnarray*}
and using again $D^{i,l}+D^{l,j}=D^{i,j}$, we finally arrive at
\begin{equation}\label{eqC}
C_t^k \, = \, 2 \, (k-2) \, \times \, \# \{(i,l)\in K^2|i<l\} \, = \, k(k-1)(k-2) \, .
\end{equation}

\subsection{Back to the process $Z^k$.} \quad With the results obtained in \eqref{eqdoob} and \eqref{eqC} we may simplify \eqref{eqito}, writing (still on $t<\xi$)
\begin{equation}\label{eqdist}
Z^k_t = Z^0_t + 4\sqrt k \, \int_0^t \, \sqrt{Z^k_s} \, dW_s^k + 2k(k-1)\left(4 - \frac{k \chi}{N}\right) \, t - \frac {2\chi}{N} \int_0^t R^k_s \, ds \, .
\end{equation}
Hence defining $$V_t^k = \frac{1}{4k} \, Z_t^k$$ the process $V^k$ satisfies
\begin{equation}\label{eqdist2}
dV^k_t = 2 \, \sqrt{V^k_t} \, dW_t^k + (k-1)\left(2 - \frac{k \chi}{2N}\right) \, dt - \frac {\chi}{2kN} R^k_t \, dt \, ,
\end{equation}
i.e. can be viewed as a perturbation of a squared Bessel process of dimension $$\delta = (k-1)\left(2 - \frac{k \chi}{2N}\right) \, ,$$ we shall denote by $U^k$ in the sequel. 
\medskip

\subsection{The case of an $N$-collision.} \quad If $k=N$, $R^N=0$ so that $V^N$ is exactly the squared Bessel process of dimension $\frac{N-1}{2} \, (4-\chi)$. Hence, according to proposition \ref{propbes}
\begin{itemize}
\item if $\chi>4$ there is explosion in finite time for the process $V^N$ (hence for $X$ also),
\item if $\chi =4$, there is an almost sure $N$-collision in finite time, and then all the particles are glued, provided no explosion occurred before for the process $X$, 
\item if $4\left(1 - \frac{1}{N-1}\right) < \chi <4$ there is an almost sure $N$-collision in finite time, provided no explosion occurred before for the process $X$,
\item if $\chi \leq 4\left(1 - \frac{1}{N-1}\right)$ there is almost surely no $N$-collision (before explosion).
\end{itemize} 

In particular we see that the particle system immediately feels the critical value $\chi=4$, in particular explosion occurs in finite time as soon as $\chi>4$. \\ For $4\left(1 - \frac{1}{N-1}\right) < \chi <4$ we know that $V^N$ is instantaneously reflected once it hits the origin, but it does not indicate whether all or only some particles will separate (we only know that they are not all glued). Notice that when $N=2$ this condition reduces to $0<\chi<4$, and then both particles are separated. Hence in this very specific case, there is no explosion (for the distance between both particles) in finite time almost surely, but there are always $2$-collisions.

\subsection{Towards non explosion for $\chi \leq 4\left(1 - \frac{1}{N-1}\right)$.} \quad As we said before, the lifetime $\xi$ is greater than or equal to the first multiple collision time $T$. \\ Since we shall consider $V^k$ as a perturbation of $U^k$, what happens for the latter ?
\begin{itemize}
\item for $\chi > \frac{4N}{k}$, $U^k$ reaches $0$ in finite time a.s. and then explosion occurs,
\item for $\chi = \frac{4N}{k}$, $U^k$ reaches $0$ and is sticked,
\item for $\frac{4N}{k}>\chi > \frac{4N}{k} \left(1-\frac{1}{k-1}\right)$, $U^k$ reaches $0$ and is instantaneously reflected,
\item for $\chi \leq \frac{4N}{k} \, \left(1-\frac{1}{k-1}\right)$, $U^k$ does not hit $0$ in finite time a.s.
\end{itemize}

\begin{lemma}\label{lemk}
For all $3 \leq k \leq N$, it holds $$\frac{4N}{k} \, \left(1-\frac{1}{k-1}\right) \geq 4\left(1 - \frac{1}{N-1}\right) \, .$$
\end{lemma}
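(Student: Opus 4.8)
The plan is to clear denominators and reduce the claim to an elementary polynomial inequality in the two integer variables $k$ and $N$. First I would divide both sides by $4$ and rewrite the two factors in closed form, using $1 - \frac{1}{k-1} = \frac{k-2}{k-1}$ and $1 - \frac{1}{N-1} = \frac{N-2}{N-1}$. Since $k \geq 3$ and $N \geq 3$, the denominators $k$, $k-1$ and $N-1$ are all strictly positive, so the asserted inequality is equivalent, after cross-multiplication, to
$$N(N-1)(k-2) \, \geq \, (N-2) \, k(k-1) \, .$$

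Next I would study the sign of the difference
$$p(k) := N(N-1)(k-2) - (N-2) \, k(k-1) \, ,$$
viewed as a quadratic polynomial in $k$ for fixed $N$. Its leading coefficient is $-(N-2) < 0$, so the parabola opens downward. The crucial observation — the one that makes the bound tight in the intended application to the $N$-collision — is that $k=N$ is a root of $p$: indeed $p(N) = N(N-1)(N-2) - (N-2)N(N-1) = 0$. Factoring out this root, the companion root is $r = \frac{2(N-1)}{N-2}$, so that $p(k) = -(N-2)(k-N)\left(k - \frac{2(N-1)}{N-2}\right)$.

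Finally, because the parabola opens downward, $p(k) \geq 0$ holds exactly on the interval between its two roots, and it remains only to check that the range $3 \leq k \leq N$ lies inside that interval. For $N \geq 4$ one has $\frac{2(N-1)}{N-2} \leq 3$ (equivalent to $2(N-1) \leq 3(N-2)$, i.e. to $N \geq 4$), hence $r \leq 3 \leq k \leq N$ and $p(k) \geq 0$; in the degenerate case $N=3$ the constraint forces $k = 3 = N$, where $p$ vanishes, so the (non-strict) inequality still holds.

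The computation is routine; the only point requiring care is that the map $k \mapsto \frac{k-2}{k(k-1)}$ is \emph{not} monotone on $[3,N]$ (it increases up to $k = 2 + \sqrt 2$, then decreases), so one cannot conclude by a naive monotonicity argument. The right device is precisely the factorization above, which locates the minimum of $p$ over $[3,N]$ at the endpoint $k=N$ by exhibiting the companion root below $3$.
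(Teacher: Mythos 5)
Your proof is correct, and it takes a genuinely different route from the paper's. The paper argues by monotonicity: it introduces $g(u)=\frac{4N}{u}\left(1-\frac{1}{u-1}\right)$, computes $g'$, observes that $g$ is decreasing on $[2+\sqrt 2,+\infty)$, and concludes $g(k)\geq g(N)$ for $4\leq k\leq N$; since the critical point $2+\sqrt 2\approx 3.41$ exceeds $3$, the case $k=3$ must then be checked by hand, where the inequality reduces to $\frac{N}{6}\geq\frac{N-2}{N-1}$, i.e.\ $(N-3)(N-4)\geq 0$. Your argument instead clears denominators to the equivalent polynomial inequality $N(N-1)(k-2)\geq (N-2)k(k-1)$ and factors the downward-opening quadratic $p(k)=-(N-2)(k-N)\left(k-\frac{2(N-1)}{N-2}\right)$, using that $k=N$ is a root and that the companion root is $\leq 3$ precisely when $N\geq 4$ (with $N=3$ forcing $k=N$, a root). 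Both proofs must confront the same obstruction — the non-monotonicity of $k\mapsto\frac{k-2}{k(k-1)}$ below $2+\sqrt 2$ — but they resolve it differently: the paper by splitting off $k=3$ as a separate verification, you by an exact factorization that handles all $k\geq 3$ uniformly and avoids calculus altogether. Your version has the added merit of making the equality cases transparent ($k=N$ always, and $(k,N)=(3,4)$ where the companion root hits $3$), which matches the paper's parenthetical remark that equality holds for $N=3$ and $N=4$.
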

\begin{proof}
Introduce the function $$u \mapsto g(u) =\frac{4N}{u} \, \left(1-\frac{1}{u-1}\right)$$ defined for $u>1$. Then $$g'(u) = \frac{4N}{u(u-1)} \, \left(\frac{2-u}{u} + \frac{1}{u-1}\right)$$  is negative on $[2+\sqrt 2 , +\infty[$, so that the lemma is proved for $N \geq k\geq 4$. For $k=3$, it amounts to $\frac{N}{6} \geq \frac{N-2}{N-1}$ which is true for all $N \geq 3$ (with equality for $N=3$ and $N=4$).
\end{proof}

In particular, since $\chi \leq 4\left(1 - \frac{1}{N-1}\right)$, $U^k$ never hits $0$ for $3 \leq k \leq N$, while it reaches $0$ but is instantaneously reflected for $k=2$. What we expect is that the same occurs for $V^k$. \\

In order to prove it, let $T$ be the first multiple collision time. With our convention (changing indices if necessary) there exists some $2\leq k \leq N$ such that $T$ is the first $k$-collision time $T^k$. Note that this does not prevent other $k'$-collisions (with $k' \leq k$) possibly at the same time $T$ for the particles with indices larger than $k+1$. But as we will see this will not change anything. The reasoning will be the same but the conclusion completely different for $k=2$ and for $k\geq 3$.
\smallskip

\subsubsection{No $k$-collisions for $k\geq 3$.} \quad 
Introduce, for $\varepsilon >0$, the random set $$A^k_\varepsilon = \{T=T^k<+\infty \, \textrm{ and } \, \inf_{i \in K \, , \,  l\geq k+1} \, \inf_{t \leq T} \, \parallel D^{i,l}_t\parallel \geq 2 \varepsilon \} \, .$$ It holds $$\{T=T^k<+\infty\} = \bigcup_{\varepsilon \, \in 1/\mathbb N} \, A_\varepsilon^k \, . $$ In particular if $\mathbb P(T=T^k<+\infty)>0$ there exists some $\varepsilon >0$ so that $\mathbb P(A_\varepsilon^k)>0$. \\ We shall see that this is impossible when $k\geq 3$.
\medskip

Indeed recall that $$ R_t^k = \sum_{(i,j)\in \bar K^2} \,  \sum_{l=k+1}^N \, D^{i,j}_t \, \left(\frac{D^{i,l}_t}{\parallel D^{i,l}_t\parallel^2} + \frac{D^{l,j}_t}{\parallel D^{l,j}_t\parallel^2}\right) \, .$$ So on $A_\varepsilon^k$, we have, for $t\leq T$, 
\begin{eqnarray*}
|R_t^k| &\leq& \sum_{(i,j)\in \bar K^2} \,  \sum_{l=k+1}^N \, \parallel D^{i,j}_t\parallel \, \left(\frac{1}{\parallel D^{i,l}_t\parallel^2} + \frac{1}{\parallel D^{l,j}_t\parallel^2}\right)\\ &\leq& \sum_{(i,j)\in \bar K^2} \, \parallel D^{i,j}_t\parallel \, \frac{N-k}{\varepsilon} \\ &\leq& \frac{N-k}{\varepsilon} \, \sqrt{k(k-1)} \, \sqrt{Z_t^k}
\end{eqnarray*}
the latter being a consequence of Cauchy-Schwarz inequality. Thus on $A_\varepsilon^k$, for $t\leq T$
\begin{equation}\label{eqR}
|R_t^k| \leq \frac{2}{\varepsilon} \, (N-k) k \, \sqrt{k-1} \, \sqrt{V_t^k} \, .
\end{equation}
Hence, on $A_\varepsilon^k$ for $t\leq T$ the drift $b^k$ (which is not Markovian) of $V_t^k$ satisfies 
\begin{equation}\label{eqdriftV}
b^k \geq \hat b^k(v)=\frac{(k-1)}{2} \, \left(4 - \frac{N\chi}{k}\right) - \frac{2}{\varepsilon} \, (N-k) k \, \sqrt{k-1} \, \sqrt v \, .
\end{equation}
In particular for any $\theta >0$, $$\hat b^k(v) \geq \frac{(k-1)}{2} \, \left(4 - \frac{N\chi}{k}\right) - \theta$$ provided $v$ is small enough. Thus the hitting time of the origin for the process with drift $\hat b^k$ is larger than the one for the corresponding squared Bessel process (thanks to well known comparison results between one dimensional Ito processes, see e.g. \cite{IW} Chap.6, Thm 1.1), and since this holds for all $\theta$, finally is larger than the one of $U^k$. But as we already saw, $U^k$ never hits the origin for $3\leq k$. Using again the comparison theorem (this time with $b^k$ and $\hat b^k(v)$), $V^k$ does not hit the origin in finite time on $A^k_\varepsilon$ which is in contradiction with $\mathbb P(A^k_\varepsilon)>0$.
\medskip

\subsubsection{About $2$-collisions.} \quad Actually all we have done in the previous sub subsection is unchanged for $k=2$, except the conclusion. Indeed $U^2$ reaches the origin but is instantaneously reflected. So $V^2$ (on $A_\varepsilon^2$) can reach the origin too, but is also instantaneously reflected. Actually using that $$b^k(v) \leq \bar b^k(v)=\frac{(k-1)}{2} \, \left(4 - \frac{N\chi}{k}\right) + \frac{2}{\varepsilon} \, (N-k) k \, \sqrt{k-1} \, \sqrt v \, ,$$ together with the Feller's explosion test, it is easily seen that $V^2$ will reach the origin with a (strictly) positive probability (presumably equal to one, but this is not important for us).

But this instantaneous reflection is not enough for the non explosion of the process $X$, because $X^i$ is $\mathbb R^2$ valued. Before going further in the construction, let us notice another important fact: there are no multiple $2$-collisions at the same time, i.e. starting from $A^c$ the process lives in $M$ at least up to the explosion time $\xi$. Of course this is meaningful provided $N\geq 4$.\\

To prove the previous sentence, first look at $$Y_t=\parallel D_t^{1,2}\parallel^2 + \parallel D_t^{3,4}\parallel^2 \, ,$$ assuming that $Y_T=0$ and that no other $2$-collision happens at time $T$. It is easily seen that (just take care that we had an extra factor $2$ in our definition of $Z_t^k$)
\begin{eqnarray*}
Y_t &=& Y_0 + 2\sqrt 2 \, \int_0^t  \, \left(D^{1,2}_s \, (dB_s^1 - dB_s^2)+D^{3,4}_s \, (dB_s^3 - dB_s^4)\right) + 8 \left(2-\frac \chi N \right)t \\ & & \, - \, \frac{\chi}{N} \, \int_0^t \, \left(D^{1,2}_s \, \sum \limits_{\underset{l \neq 1,2}{l=1}}^N \left(\frac{D^{1,l}_s}{\parallel D^{1,l}_s\parallel^2} + \frac{D^{l,2}_s}{\parallel D^{l,2}_s\parallel^2}\right) \, + \, D^{3,4}_s \, \sum \limits_{\underset{l \neq 3,4}{l=1}}^N \left(\frac{D^{3,l}_s}{\parallel D^{3,l}_s\parallel^2} + \frac{D^{l,4}_s}{\parallel D^{l,4}_s\parallel^2}\right)\right) \, ds \, ,
\end{eqnarray*}
so that, defining $V_t=Y_t/4$ we get that $$dV_t = 2 \, \sqrt{V_t} \,  dW_t + 2\left(2-\frac \chi N \right)dt + \, R_t \, dt$$ where $R_t$ is a remaining term we can manage just as we did for $Z_t^k$. Since for $N\geq 4$, $2\left(2-\frac \chi N \right)\geq 2$, $V_t$, hence $Y_t$ does not hit the origin. Notice that if we consider $k(\geq 2)$ $2$-collisions, the same reasoning is still true, just replacing $4$ by $2k$, the final equation being unchanged except for $R_t$. 
\medskip

\subsubsection{Non explosion.} \quad According to all what precedes what we need to prove is the existence of the solution of \eqref{eqsys} with an initial configuration satisfying $X_0=x$ with $x^1=x^2$, all other coordinates being different and different from $x^1=x^2$. Indeed, on $\xi < +\infty$, $X_\xi \in \delta M$ the set of particles with exactly two glued particles, so that if we can prove that starting from any point of $\delta M$, we can build a strong solution on an interval $[0,S]$ for some strictly positive stopping time $S$, it will show that $\xi=+\infty$ almost surely. However we will not be able to prove the existence of such a strong solution. Actually we think that it does not exist. We will thus build some weak solution and show uniqueness in some specific sense.

This will be the goal of the next sections.
\bigskip

\section{Building a solution.}\label{secsol}

\subsection{Existence of a weak solution.}\label{secweak}

Writing $$M = \cup_{i<j} \, \cap_{k\neq l \, ; \, l\neq i,j} \, \{X^k\neq X^l\}$$ we see that $M$ is an open subset of $\mathbb R^{2N}$. 

Recall that $x \in \delta M$ means that exactly two coordinates coincide (say $x^1=x^2$), all other coordinates being distinct and distinct from $x^1$. We may thus define $$d_x=\min\{i \geq 3 \, ; i \neq j \, ; \, \, j=1,...,N \, ; \, \parallel x^i -x^j\parallel\} \, > \, 0 \, ,$$ so that 
\begin{equation}\label{eqomega}
\Omega_x =\Pi_{j=1}^N \, B(x^j,d_x/2) \, \subset \, M \, ,
\end{equation}
and points $y \in \Omega_x\cap \delta M$ will satisfy $y^1=y^2$. If $x \notin \delta M$, we may similarly define $d_x=\min\{i \neq j \, ; \, j=1,...,N \, ; \, \parallel x^i -x^j\parallel\}$ and then $\Omega_x$. In all cases the balls $B(.,.)$ are the open balls. Now if $K$ is some compact subset of $M$ we can cover $K$ by a finite number of sets $\Omega_x$, so that for any measure $\mu$, a function $g$ belongs to $\mathbb L^1_{loc}(M,\mu)$ if and only if $g \in \mathbb L^1(\Omega_x,\mu)$ for all $x$ in $M$.
\medskip

The natural measure to be considered is
\begin{equation}\label{eqmu}
\mu(dX^1,...,dX^N) = \Pi_{1\leq i<j \leq N} \, \parallel X^i-X^j\parallel^{-\frac \chi N} \, dX^1 ... dX^N \, ,
\end{equation}
since it is, at least formally, the symmetric measure for the system \eqref{eqsys}.

It is clear that for $x \notin \delta M$, $\mu$ is a bounded measure on $\Omega_x$. When $x \in \delta M$, say that $x^1=x^2$ and perform the change of variables $$Y^1=X^1-X^2 \quad , \quad Y^2=X^1+X^2 \, .$$ In restriction to $\Omega_x$, $\mu$ can be written $$\mu(dX^1,...,dX^N) = C(N,x) \, \parallel Y^1\parallel^{-\frac \chi N} \, dY^1 \, dY^2 \, dX^3 ...dX^N \, ,$$ hence is a bounded measure on $\Omega_x$ provided $\chi < 2N$ just looking at polar coordinates for $Y^1$. In this case it immediately follows that $\mu$ is a $\sigma$ finite measure on $M$. Also remark that if $f$ is compactly supported by $K$ and belongs to $\mathbb L^2(d\mu)$ then it belongs to $\L^2(dX)$ and $$\int_K \, f^2 \, dX \leq \sup_K (\parallel Y^1\parallel^{\frac \chi N})  \, \int_K \, f^2 \, d\mu \, .$$  
But we can say much more. 
\medskip

To this end consider the symmetric form 
\begin{equation}\label{eqform}
\mathcal E(f,g) = \int_M \, <\nabla f , \nabla g> \, d\mu \quad , \quad f,g \in C^\infty_0(M) \, .
\end{equation}
First we check that this form is closable in the sense of \cite{Fuk}. To this end it is enough to show that it is a closable form when restricted to functions $f,g \in C^\infty_0(\Omega_x)$ for all $x \in M$. If $x \notin \delta M$ the form is equivalent to the usual scalar product on square Lebesgue integrable functions, so that it is enough to look at $x \in \delta M$.

Hence let $f_n$ be a sequence of functions in $C^\infty_0(\Omega_x)$, converging to $0$ in $\L^2(d\mu)$ and such that $\nabla f_n$ converges to some vector valued function $g$ in $\L^2(d\mu)$. What we need to prove is that $g$ is equal to $0$. To this end consider a vector valued function $h$ which is smooth and compactly supported in $\Omega_x \cup \{d(.,\delta M)>\varepsilon\}$ for some $\varepsilon >0$. Then a simple integration by parts shows that $$\int \, <g,h> \, d\mu = \lim_n \, \int \, <\nabla f_n,h> \, d\mu = \lim_n \, \int \, f_n \, H \, d\mu$$ for some $H \in \L^2(d\mu)$ so is equal to $0$. Hence $g$ vanishes almost surely on $\Omega_x \cup \{d(.,\delta M)>\varepsilon\}$, for all $\varepsilon >0$, so that $g$ is $\mu$-almost everywhere equal to $0$.

By construction, $\mathcal E$ is regular and local. Hence, its smallest closed extension $(\mathcal E, \mathcal D(\mathcal E))$ is a Dirichlet form, which is in addition regular and local. According to Theorem 6.2.2. in \cite{Fuk}, there exists a $\mu$-symmetric diffusion process $X_.$ whose form is given by $\mathcal E$. Notice that, integrating by parts, we see that the generator of this diffusion process coincides with the generator $L$ given by  

\begin{equation}\label{eqgenerator}
L=\sum_{i=1}^N \, \Delta_{x^i} - \, \frac \chi N \, \sum_{i=1}^N \, \left(\sum_{i\neq j} \, \frac{x^i-x^j}{\parallel x^i - x^j\parallel^2}\right) \, \nabla_{x^i}
\end{equation}
for the functions $f$ in $C^\infty_0(M)$ such that $Lf \in \mathbb L^2(\mu)$. This is a core for the domain $D(L)$. The Dirichlet form theory tells us that once $f \in D(L)$, $f(X_t)-f(X_0) -\int_0^t \, Lf(X_s) ds$ is a $\mathbb P_x$ martingale for quasi every starting point $x$, i.e. for all $x$ out of some subset $E$ of $M$ which is of zero $\mu$-capacity. 

But remark that for any $x \in M-E$ and $t>0$, the transition kernel $p_t(x,.)$ of the Markov semi-group is absolutely continuous with respect to $\mu$. Indeed using the local Malliavin calculus as in \cite{Cat1} (or elliptic standard results), this transition kernel has a (smooth) density w.r.t. Lebesgue measure (hence w.r.t. $\mu$) on each open subset of $M \cap \{d(.,\delta M)>\varepsilon\}$ for any $\varepsilon >0$. Hence if $\mu(A)=0$, $\mu(A \cap \{d(.,\delta M)>\varepsilon\})=0$ for all $\varepsilon >0$ so that $p_t(x,A \cap \{d(.,\delta M)>\varepsilon\})=0$ and finally using monotone convergence, $p_t(x,A)=0$.\\ Since $p_t(x,.)$ is absolutely continuous w.r.t. $\mu$, we deduce from Theorem 4.3.4 in \cite{Fuk} that the sets of zero $\mu$ capacity are exactly the polar sets for the process.
\smallskip

Note that the function $x \mapsto x$ does not belong to $D(L)$, so that we cannot use the previous result. Nevertheless 
\begin{lemma}\label{lemcoord}
Assume that $\chi<N$. Then for all $x \in M-E$ and all $i=1,...,N$, $$X^i_t-X_0^i-\int_0^t \, \frac \chi N \,  \left(\sum_{i\neq j} \, \frac{X_s^i-X_s^j}{\parallel X_s^i - X_s^j\parallel^2}\right) ds$$ is a $\mathbb P_x$ martingale, and actually is $\mathbb P_x$ almost surely equal to $\sqrt 2 \, B^i_t$ for some Brownian motion.
\end{lemma}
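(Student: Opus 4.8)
\emph{Plan of proof.} Since the statement concerns the $\R^2$-valued coordinate $X^i$, I would argue componentwise: write $x^{i,\alpha}$, $\alpha\in\{1,2\}$, for the two scalar coordinates of the $i$-th particle, and note that, with $L$ the generator \eqref{eqgenerator}, $L(x^{i,\alpha})$ is (up to the sign convention in the decomposition) exactly the singular drift appearing in the statement, since $\Delta_{x^i}x^{i,\alpha}=0$ and only the first order singular term survives. The obstruction is the one already pointed out in the excerpt: $x^{i,\alpha}$ is unbounded and $\mu$ is infinite, so $x^{i,\alpha}\notin\L^2(\mu)$, and moreover near $\delta M$ the drift $L(x^{i,\alpha})$ fails to lie in $\L^2_{loc}(\mu)$, so that $x^{i,\alpha}\notin D(L)$ even locally. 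The whole point is therefore to produce the martingale decomposition of $x^{i,\alpha}(X_t)$ by a localisation procedure and to identify the finite variation part with $\int_0^t L(x^{i,\alpha})(X_s)\,ds$.

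First I would localise away from infinity and from the collision set. For a cut-off $\phi_R\in C^\infty_0(M)$ with $\phi_R\equiv1$ on a compact $G_R\subset M$, the product $f_R=\phi_R\,x^{i,\alpha}$ lies in $C^\infty_0(M)\subset D(L)$ (a core element, $\mu$ being bounded on compacts of $M$ since $\chi<2N$). The Fukushima decomposition already invoked gives, for quasi every $x$, that $f_R(X_t)-f_R(X_0)-\int_0^t Lf_R(X_s)\,ds$ is a $\P_x$-martingale. Stopping at the exit time $\tau_R$ of the open set $\{\phi_R\equiv1\}$, where $f_R$ and $Lf_R$ coincide with $x^{i,\alpha}$ and $L(x^{i,\alpha})$, I obtain that $A^{i,\alpha}_{t\wedge\tau_R}$ is a martingale, where $A^{i,\alpha}_t=x^{i,\alpha}(X_t)-x^{i,\alpha}(X_0)-\int_0^t L(x^{i,\alpha})(X_s)\,ds$; taking the countable union over $R\in\N$ and over $(i,\alpha)$ of the exceptional polar sets and absorbing it into $E$ keeps $E$ polar. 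Thus $A^{i,\alpha}$ is a local martingale, but only up to $\lim_R\tau_R$, which is the first hitting time of $\delta M$: being compact in $M$, $G_R$ must stay away from $\delta M$, so this localisation alone does not see the $2$-collisions the process genuinely performs.

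The main obstacle is therefore to push the martingale property through the $2$-collisions, and this is where $\chi<N$ enters. The key estimate I would establish is $\int_0^t|L(x^{i,\alpha})(X_s)|\,ds<+\infty$ almost surely. The only singularities of the drift are at $2$-collisions (the process lives in $M$, no $k$-collision with $k\geq3$ occurs and there are no simultaneous $2$-collisions, by Section~\ref{secproofs}); near a collision of the pair $\{i,j\}$ the only singular term is $\parallel D^{i,j}_s\parallel^{-1}$, and by the comparison of Section~\ref{secproofs} the distance $\parallel D^{i,j}_s\parallel=2\sqrt{V^2_s}$ behaves like a Bessel process of dimension $2-\frac{\chi}{N}$. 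Since $\chi<N$ makes this dimension strictly larger than $1$, the classical fact $\int_0^t\parallel D^{i,j}_s\parallel^{-1}\,ds<+\infty$ holds a.s. Equivalently, in Dirichlet form language, $\chi<N$ is exactly the condition under which $L(x^{i,\alpha})\in\L^1_{loc}(\mu)$, since near $\delta M$ one has $\int\parallel Y^1\parallel^{-1}\parallel Y^1\parallel^{-\frac{\chi}{N}}\,dY^1<+\infty$ iff $\chi<N$. Granting this, the collision times form a set of zero Lebesgue measure, the integral defining $A^{i,\alpha}$ converges absolutely, and I would conclude that $A^{i,\alpha}$ is a genuine continuous local martingale on the lifetime interval $[0,\xi)$: either by restarting at the instants where the process re-enters $M$, via the strong Markov property and the instantaneous reflection of $V^2$, or more cleanly by the Revuz correspondence, the representing measure of the zero-energy part of the Fukushima decomposition of $x^{i,\alpha}$ being $-L(x^{i,\alpha})\,\mu$, a well-defined signed measure precisely because $L(x^{i,\alpha})\in\L^1_{loc}(\mu)$. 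In the regime of Theorem~\ref{thmmain} one moreover has $\xi=+\infty$.

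It remains to identify the law of the martingale. The energy measure of the martingale additive functional attached to $x^{i,\alpha}$ is $2\,\Gamma(x^{i,\alpha})\,\mu$, with carré du champ $\Gamma(f,g)=\langle\nabla f,\nabla g\rangle$, whence
$$\langle A^{i,\alpha},A^{j,\beta}\rangle_t \, = \, 2\int_0^t\langle\nabla x^{i,\alpha},\nabla x^{j,\beta}\rangle(X_s)\,ds \, = \, 2\,\delta_{ij}\,\delta_{\alpha\beta}\,t\,.$$
A continuous local martingale with this deterministic bracket is, by Lévy's characterization, $\sqrt2$ times a standard Brownian motion; in particular each $A^{i,\alpha}$ is a true $\P_x$-martingale, the pair $(A^{i,1},A^{i,2})=\sqrt2\,B^i$ for a standard planar Brownian motion $B^i$, and the vanishing cross-brackets yield the independence of the $B^i$. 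This is exactly the assertion of the lemma. I expect the delicate point to be Step~3, i.e. the passage from ``local martingale up to the first collision'' to ``local martingale for all time'', which is precisely what forces the integrability threshold $\chi<N$.
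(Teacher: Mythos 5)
Your plan shares the paper's skeleton (localization, reduction to the singular pair, identification of the bracket and L\'evy's characterization), and you correctly isolate the role of $\chi<N$ as the integrability threshold $L(x^{i,\alpha})\in\mathbb{L}^1_{loc}(\mu)$, i.e. the a.s. finiteness of $\int_0^t\parallel D^{i,j}_s\parallel^{-1}ds$; this matches the paper's observation that $Lg^j\in\mathbb{L}^p(\mu)$ for $p<2-\chi/N$. But the decisive step --- carrying the martingale property through the $2$-collision times --- is not proved by what you write. Your main argument is: the drift integral converges absolutely, the collision times form a Lebesgue-null set, and one has a local martingale away from collisions, hence a local martingale globally, ``by restarting when the process re-enters''. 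This inference is invalid. Applied to $f(x)=|x|$ and a one-dimensional Brownian motion (where $Lf=0$ off the origin, the zero set of $B$ is Lebesgue-null, and the drift integral is trivially finite), it would conclude that $|B_t|$ is a local martingale, which Tanaka's formula refutes: a local time, i.e. a singular finite-variation part supported exactly on the exceptional time set, appears. Ruling out such a local-time contribution supported on the collision times is the entire content of the lemma, and neither the finiteness of $\int_0^t|L(x^{i,\alpha})(X_s)|\,ds$ nor the strong Markov property can deliver it; note also that the collision time set, like the zero set of a Bessel process of dimension in $(0,2)$, a.s. has no isolated points, so there is no ``first re-entry instant'' to restart from. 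The paper's proof is built precisely to close this gap: in the coordinates $y=x^1-x^2$ it approximates $g^1(x)=y_1$ by $v_\varepsilon=g^1\,h_\varepsilon(g^1)$, which \emph{does} belong to $D(L)$, applies the Dirichlet-form martingale decomposition to $v_\varepsilon$, and passes to the limit; the boundary-layer terms in $Lv_\varepsilon$ (of size $1/\varepsilon$ on a layer of width $\varepsilon$) are odd in $g^1$ while the measure $\mu$ is, to leading order near $\delta M$, even in $g^1$, so they cancel in the limit instead of producing the Dirac mass on $\delta M$ that the analogous (even) terms produce in the $|x|$ example.

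Your fallback route via the Revuz correspondence is the right statement but, as written, assumes the conclusion: local integrability makes $L(x^{i,\alpha})\,\mu$ a well-defined signed measure, but it does not show that this measure \emph{is} the Revuz measure of the zero-energy part, i.e. that no singular part charged by $\delta M$ occurs. That identification is equivalent to the integration-by-parts identity $\mathcal E(g^1,\phi)=-\int\phi\,Lg^1\,d\mu$ for $\phi\in C_0^\infty(M)$ with vanishing boundary contribution along $\delta M$ (the flux term on the tube $\{\parallel y\parallel=r\}$ is $O(r^{1-\chi/N})$, and one needs $\int_{\parallel y\parallel<r}|Lg^1|\,d\mu\to0$), and proving this is exactly where $\chi<N$ does its work; it is the analogue of the paper's limiting argument for $Lv_\varepsilon$. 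Two further slips, both repairable: $C_0^\infty(M)\not\subset D(L)$ --- the paper's core is $\{f\in C_0^\infty(M)\,:\,Lf\in\mathbb{L}^2(\mu)\}$, and your $f_R=\phi_R\,x^{i,\alpha}$ fails this whenever the support of $\phi_R$ meets $\delta M$, since $\parallel y\parallel^{-1}\notin\mathbb{L}^2_{loc}(\mu)$; and a compact subset of $M$ can perfectly well meet $\delta M$ (which is a relatively closed subset of the open set $M$, not part of $\partial M$), so your localization is confined away from $\delta M$ for the first reason, not the topological one you give. Finally, you omit the paper's last step, which upgrades the result from quasi-every (equivalently $\mu$-a.e.) starting point to every $x\in M-E$ via the absolute continuity of $p_t(x,\cdot)$ with respect to $\mu$ and a weak $\mathbb{L}^2$ limit as the starting time tends to $0$; merely enlarging the polar set, as you propose, proves a weaker statement than the lemma's.
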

\begin{proof}
To prove the lemma, for all $x \in M-E$ it is enough to show the martingale property starting from $x$ up to the exit time $S(x)$ of $\Omega_x$ (since $X_{S(x)} \in M-E$ because $E$ is polar, see the discussion above). In the sequel, for notational convenience, we do not write the exit time $S(x)$ (all times $t$ have to be understood as $t\wedge S(x)$) and we simply write $M$ instead of $\Omega_x$.

To show this result it is actually enough to look locally in the neighborhood of a point $x \in \delta M$ such that $x^1=x^2$, and with our previous notation to look at both coordinates of $y=x^1-x^2$. 
Indeed $x \mapsto x^1+x^2$ belongs (at least locally) to $D(L)$ as well as all other coordinates $x^j$ for $j\geq 3$. 

Let $g^j(x)=y_j$ for $j=1,2$ be the coordinate application of $y=x^1-x^2$. Clearly $Lg^j \in \mathbb L^p(\mu)$ for $p<2-\frac \chi N$, hence belongs to $\mathbb L^1$ thanks to our assumption on $\chi/N$. Introduce the function defined on $\mathbb R$ by, $$h_\varepsilon(u)=\sin^2\left(\frac{\pi \, u}{2 \varepsilon}\right) \mathbf 1_{|u|\leq \varepsilon} + \, \mathbf 1_{|u| > \varepsilon} \, .$$ $h$ is of $C^2$ class except at $|u|=\varepsilon$. Now define $v_\varepsilon(x)=g^1(x) \, h_\varepsilon(g^1(x))$. We have $$L(v_\varepsilon)(x)=\left[4h'_\varepsilon(g^1)+ 2 \, g^1 \, h''_\varepsilon(g^1) - \frac \chi N \, \left(2 h_\varepsilon(g^1) \, \frac{g^1}{|g^1+g^2|^2} + 2h'_\varepsilon(g^1) + \, R_\varepsilon\right)\right](x)$$ the remaining term $R_\varepsilon$ corresponding to the interactions with particles $x^j$ for $j\geq 3$.

For all $\varepsilon >0$, $v_\varepsilon$ thus belongs to $D(L)$ and $v_\varepsilon(X_t)-v_\varepsilon(X_0) -\int_0^t \, Lv_\varepsilon(X_s) ds$ is a $\mathbb P_x$ martingale, with brackets $4 \, \int_0^t \, |\nabla v_\varepsilon (X_s)|^2 \, ds$ for all $x \in M-E$.

But it is easily seen that $Lv_\varepsilon$ converges to $Lg^1$ in $\mathbb L^1(\mu)$ as $\varepsilon \to 0$. Since $v_\varepsilon$ converges to $g^1$ in $\mathbb L^1(\mu)$ too, we deduce that 
\begin{equation}\label{eqmart}
\E_\mu\left(g^1(X_{t+h})-g^1(X_t) - \int_t^{t+h} Lg^1(X_s) ds | \mathcal F_t\right)=0 \quad \textrm {for all $t\geq 0$ and $h\geq 0$,}
\end{equation}
 where $\mathcal F_t$ denotes the natural filtration on the probability space, since the same property is true for $v_\varepsilon$. Similarly the brackets converge to $4t$. Since the same holds for $g^2$, we get the desired result  $\mathbb P_\mu$ almost surely. Actually this result holds true $\mathbb P_x$ almost surely for $\mu$ almost all $x\in M$.

But since $p_t(x,.)$ is absolutely continuous w.r.t. $\mu$ for $t>0$, it immediately follows using the Markov property at time $t$, that \eqref{eqmart} is true $\mathbb P_x$ a.s. for all $x\in M-E$, but only for $t>0$. Hence for all $x\in M-E$ and all $t>0$, $N_t^s =g^1(X_{t+s})-g^1(X_t) - \int_t^{t+s} Lg^1(X_u) du$ is a  martingale defined on $[t,+\infty[$, whose bracket is given by $4(s-t)$, i.e. is ($2$ times) a Brownian motion. In particular for a fixed $t$, $(N_t^s)_{0<s\leq t}$ is bounded in $\mathbb L^2(\mathbb P_x)$. Up to a sub-sequence it is thus weakly convergent in $\L^2(\mathbb P_x)$ as $s \to 0$ so that $N_t^0=N_t$ is well defined $\mathbb P_x$ a.s., and satisfies \eqref{eqmart} for all $t\geq 0$ this time. Thus it is a martingale with a linear bracket, i.e. $2$ times a Brownian motion.
\end{proof}
\medskip

The previous lemma shows that the diffusion $X_.$ is simply the $\mu$ symmetric solution  of \eqref{eqsys} killed when it hits the boundary $\partial M$.
\medskip

Assume in addition that $\chi \leq 4 \left(1 - \frac{1}{N-1}\right)$. Then the previous diffusion process never hits $\partial M$ since the latter is exactly the set where either some $k$-collision occurs for some $k\geq 3$ or at least two $2$-collisions occur at the same time. So it is actually the unique $\mu$-symmetric Markov diffusion defined on $\bar M$ solving \eqref{eqsys}. Indeed we could associate to any markovian  extension of $(\mathcal E,C_0^\infty(M))$ another diffusion process, which would coincide with the previous one up to the hitting time of $\partial M$ which is almost surely infinite. We have thus obtained

\begin{theorem}\label{thmweak}
Assume that $\chi \leq 4 \left(1 - \frac{1}{N-1}\right)$ and that $N \geq 4$. \\ Then there exists a unique $\mu$-symmetric (see \eqref{eqmu}) diffusion process $(X_t,\mathbb P_x)$ (i.e. a Hunt process with continuous paths), defined for $t\geq 0$ and $x \in M-E$ where $E \subset M$ is polar (or equivalently of $\mu$ capacity equal to $0$) such that for all $f \in C_0^\infty(\mathbb R^{2N})$, $$f(X_t)-f(x) - \int_0^t Lf(X_s) ds$$ is a $\mathbb P_x$ martingale (for the natural filtration) with $L$ given by \eqref{eqgenerator}. Furthermore $X_.$ lives in $M$ (never hits $\partial M$).
\end{theorem}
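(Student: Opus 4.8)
The plan is to assemble the pieces already prepared in Section \ref{secproofs} and in the construction above: the closed-form machinery, the coordinate martingale identity of Lemma \ref{lemcoord}, and the non-collision estimates, deducing in turn existence, the martingale characterization, confinement to $M$, and finally uniqueness.

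First, for existence, I would invoke the construction already carried out: since $(\mathcal{E}, C_0^\infty(M))$ is closable, regular and local, its smallest closed extension is a regular local Dirichlet form, and Theorem 6.2.2 of \cite{Fuk} associates to it a $\mu$-symmetric Hunt diffusion $(X_t, \mathbb{P}_x)$ with continuous paths, defined for quasi-every starting point, i.e. for $x$ outside a set $E$ of zero $\mu$-capacity. The absolute continuity of $p_t(x,\cdot)$ with respect to $\mu$ established above, together with Theorem 4.3.4 of \cite{Fuk}, identifies zero-capacity sets with polar sets, so $E$ is polar as claimed. The integration-by-parts computation already shows that the generator of this process coincides with $L$ of \eqref{eqgenerator} on the core $\{f \in C_0^\infty(M) : Lf \in \mathbb{L}^2(\mu)\}$.

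Second, for the martingale property for all $f \in C_0^\infty(\R^{2N})$ (whose elements are \emph{not} compactly supported away from $\partial M$), rather than working inside the core I would use Lemma \ref{lemcoord}: for every $x \in M-E$ each coordinate $X^i_t$ is a semimartingale with martingale part $\sqrt 2\, B^i_t$ and drift the singular interaction term of \eqref{eqsys}. Applying It\^o's formula to $f(X_\cdot)$ then yields directly that $f(X_t) - f(x) - \int_0^t Lf(X_s)\,ds$ is a local $\mathbb{P}_x$ martingale; since $f$ has bounded derivatives the bracket is bounded, and once confinement is known the drift integral is almost surely finite, so this is a genuine martingale. Third, for confinement I would appeal to the collision analysis: under $\chi \leq 4(1 - \tfrac{1}{N-1})$ the comparison process $U^k$ never reaches $0$ for $3 \leq k \leq N$, so by the comparison argument no $k$-collision with $k \geq 3$ occurs, while the computation on $Y_t = \parallel D_t^{1,2}\parallel^2 + \parallel D_t^{3,4}\parallel^2$ rules out two simultaneous $2$-collisions when $N \geq 4$. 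As $\partial M$ is precisely the set where some $k$-collision with $k \geq 3$ or two simultaneous $2$-collisions occur, the diffusion never hits $\partial M$, i.e. lives in $M$, which also justifies the integrability used above.

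The hard part will be uniqueness, and here I would argue through boundary irrelevance. Any $\mu$-symmetric diffusion whose Dirichlet form extends the minimal form $(\mathcal{E}, C_0^\infty(M))$ differs from the one constructed only through its behaviour on $\partial M$: two such processes are Markovian extensions of the same minimal form and therefore coincide up to the first hitting time of $\partial M$. Since that hitting time is almost surely infinite by the confinement just established, all such extensions agree, so there is exactly one $\mu$-symmetric diffusion solving the stated martingale problem. The delicate point is to confirm that any competing $\mu$-symmetric diffusion satisfying the martingale characterization genuinely arises as a Markovian extension of $(\mathcal{E}, C_0^\infty(M))$, so that the comparison up to the boundary-hitting time is legitimate; once this is secured, the polarity of $\partial M$ forces the two processes to be indistinguishable.
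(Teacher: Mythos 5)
Your overall architecture matches the paper's: existence from the regular, local Dirichlet form via Theorem 6.2.2 of \cite{Fuk}, identification of the exceptional set $E$ as polar through the absolute continuity of $p_t(x,\cdot)$ and Theorem 4.3.4 of \cite{Fuk}, confinement to $M$ via the Bessel comparison analysis of Section \ref{secproofs}, and uniqueness by observing that any Markovian extension of $(\mathcal E, C_0^\infty(M))$ yields a process coinciding with the constructed one up to the hitting time of $\partial M$, which is almost surely infinite. Those parts are sound and are exactly what the paper does. The genuine gap is in your treatment of the martingale property for $f \in C_0^\infty(\mathbb R^{2N})$, which is the heart of the theorem.

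You claim that Lemma \ref{lemcoord} makes each coordinate $X^i$ a semimartingale with martingale part $\sqrt 2\,B^i$, and that It\^o's formula ``then yields directly'' that $f(X_t)-f(x)-\int_0^t Lf(X_s)\,ds$ is a local martingale. But It\^o's formula identifies the second-order term with $\sum_i \Delta_{x^i} f$ only if you know the \emph{full} covariation matrix, namely $\langle X^{i,a}, X^{j,b}\rangle_t = 2\,\delta_{ij}\delta_{ab}\, t$ for all particle indices $i,j$ and components $a,b$. Lemma \ref{lemcoord} gives strictly less: for each fixed $i$ the compensated coordinate equals $\sqrt 2\, B^i_t$ for \emph{some} $2$-dimensional Brownian motion $B^i$, with no information on the covariations between distinct particles (the $B^i$ could a priori be correlated). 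Without the cross-brackets, the It\^o expansion contains unidentified terms $\sum_{(i,a)\neq (j,b)} \int_0^t \partial^2_{(i,a)(j,b)} f(X_s)\, d\langle X^{i,a},X^{j,b}\rangle_s$ and you cannot assemble $Lf$. This is precisely why the paper's proof avoids It\^o's formula altogether: it Taylor-expands $f$ near $\delta M$ in the variable $y=x^1-x^2$, observes that the remainder (vanishing to second order at $y=0$) has locally bounded $Lf$ and hence lies in the domain of the generator, where Fukushima's theory gives the martingale property with no bracket information needed, and that the linear part $y\,h(z,\bar x)$ is handled by the approximation argument in the proof of Lemma \ref{lemcoord}. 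Your gap is fixable inside the same framework — the cross energy measures of the cut-off coordinate functions vanish, $\nabla v^{(1)}_\varepsilon\cdot\nabla w \equiv 0$, since distinct coordinates have orthogonal gradients, and one passes to the limit as in the lemma — but as written the key step does not follow from what you invoke. Note also that the same issue silently affects your confinement step: applying the $Z^k$ computation of Section \ref{secproofs} to the Dirichlet-form process requires it to solve \eqref{eqsys} jointly, driven by independent Brownian motions, not merely coordinate-by-coordinate.
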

\begin{proof}  
As for the previous lemma, it is enough to work locally in the neighborhood of the points in $\delta M$ and to look at the new particles $(y=x^1-x^2,z=x^1+x^2,x^3,...,x^N)$. Let $f \in C_0^\infty(M)$ be written in these new coordinates. Using a Taylor expansion in $y$ ($z$ and all the others $x^j$ being fixed) and the fact that if the partial derivatives at $y=0$ of a smooth function of $y$ are vanishing, then this function belongs to the domain of the generator, we see that proving the martingale property for $f$ amounts to the corresponding martingale property for smooth functions $g$ written as $g(y,z,x^j)=y \, h(z,x^j)$ i.e. amounts to the previous lemma (and of course the remaining particles for which there is no problem). 

It remains to extend the martingale property we proved to hold for $f \in C_0^\infty(M)$ to  $f \in C_0^\infty(\mathbb R^{2N})$. Take $f \in C_0^\infty(\mathbb R^{2N})$ and define $S_\varepsilon$ as the first time the distance $d(X_.,\partial M)$ is less than $\varepsilon$. Then replacing $f$ by some $f_\varepsilon \in C_0^\infty(M)$ which coincides with $f$ on $d(y,\partial M)\geq \varepsilon$, we see that  $f(X_{t\wedge S_\varepsilon})-f(x) - \int_0^{t\wedge S_\varepsilon} Lf(X_s) ds$ is a $\mathbb P_x$ martingale. Since $S_\varepsilon$ growths to infinity the conclusion follows from Lebesgue theorem.
\end{proof}
\bigskip

\begin{remark} \label{remweak}

The main disadvantage of the previous construction is that it is not explicit and that it does not furnish a solution starting from all $x\in M$ but only for all $x$ except those in some unknown polar set. In particular, proving the regularity of the Markov transition kernels up to $\delta M$ requires additional work. The advantage is that if we require $\mu$-symmetry, we get uniqueness of the diffusion process.

This Theorem is to be compared with Theorem 7 in \cite{FJ}, where existence of a weak solution is shown by using approximation and tightness, in the same $\chi \leq 4 \left(1 - \frac{1}{N-1}\right)$ case (take care of the normalization of $\chi$ which is not the same here and therein). Note that the result in \cite{FJ} is concerned with existence starting from some initial absolutely continuous density and does not furnish a diffusion process.
\hfill $\diamondsuit$
\end{remark}
\medskip

\subsection{Existence and uniqueness of a weak solution.}\label{secstrong} \quad In this subsection we assume that $\chi \leq 4 \left(1 - \frac{1}{N-1}\right)$ and that $N \geq 4$. Our aim is to build a solution starting from any point in $M$, i.e. to get rid of the polar set $E$ in the previous sub-section. The construction will be very similar (still using Dirichlet forms) but we shall here use one result in \cite{FJ}, namely the uniqueness result for a $2$ dimensional Bessel process.

We start with an important lemma
\begin{lemma}\label{lemlocal}
Let $\mathbb P_x$ be the solution of \eqref{eqsys} built in Theorem \ref{thmweak} and starting from some allowed point $x$. Then $$\int_0^{+\infty} \, \BBone_{\delta M}(X_s) \, ds \, = \, 0 \, , \quad \mathbb P_x \, \, a.s.$$
\end{lemma}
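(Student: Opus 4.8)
The plan is to exploit the $\mu$-symmetry of the process together with the fact that $\delta M$ is negligible for $\mu$. First I would note that $\delta M$ is contained in the finite union $\cup_{i<j}\{X^i=X^j\}$, a finite union of codimension-$2$ affine subspaces of $\R^{2N}$, hence a Lebesgue-null set. Since the density of $\mu$ with respect to Lebesgue measure is locally integrable on $M$ (this is precisely the computation, valid for $\chi<2N$, that showed $\mu$ to be $\sigma$-finite near $\delta M$), it follows that $\mu(\delta M)=0$.

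Next, using that the $\mu$-symmetric semigroup is conservative on $M$ (the process lives in $M$ for all $t\geq 0$ by Theorem \ref{thmweak}), $\mu$ is invariant, so the marginal law of $X_s$ under $\mathbb P_\mu$ is $\mu$ for every $s$. By Fubini,
$$\E_\mu\left(\int_0^t \BBone_{\delta M}(X_s)\,ds\right) = \int_0^t \mu(\delta M)\,ds = 0 \, .$$
Because the integrand is nonnegative, this forces $\int_0^t \BBone_{\delta M}(X_s)\,ds = 0$ $\mathbb P_x$-almost surely for $\mu$-almost every $x$, and letting $t\to+\infty$ along rationals gives $\int_0^{+\infty}\BBone_{\delta M}(X_s)\,ds=0$ $\mathbb P_x$-a.s. for $\mu$-a.e. $x$.

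The remaining point is to upgrade this from $\mu$-almost every starting point to every $x\in M-E$. Here I would invoke the absolute continuity of the transition kernel established in the construction, $p_\varepsilon(x,\cdot)\ll\mu$ for all $\varepsilon>0$ and all $x\in M-E$. Setting $h_t(y)=\E_y\left(\int_0^t \BBone_{\delta M}(X_s)\,ds\right)$, which is bounded by $t$ and vanishes $\mu$-a.e. by the previous step, the Markov property at time $\varepsilon$ yields
$$\E_x\left(\int_\varepsilon^{t+\varepsilon}\BBone_{\delta M}(X_s)\,ds\right)=\E_x\big(h_t(X_\varepsilon)\big)=\int p_\varepsilon(x,dy)\,h_t(y)=0 \, ,$$
the last equality holding because $p_\varepsilon(x,\cdot)$ is carried by $\mu$ while $h_t=0$ $\mu$-a.e. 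Letting $\varepsilon\to 0$ by monotone convergence, and then $t\to+\infty$, gives $\E_x\left(\int_0^{+\infty}\BBone_{\delta M}(X_s)\,ds\right)=0$ for every $x\in M-E$, whence the claimed almost-sure identity.

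I expect the main obstacle to be precisely this last transfer of information: one must be certain the absolute continuity $p_t(x,\cdot)\ll\mu$ holds for every admissible starting point and not merely quasi-everywhere, which is exactly what the local Malliavin calculus / elliptic regularity argument of the construction supplies, and one must sidestep any circularity by working with the truncated, bounded functionals $h_t$ rather than with the possibly infinite total occupation time.
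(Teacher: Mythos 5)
Your route is genuinely different from the paper's, and its soft part is sound: $\mu(\delta M)=0$ is correct (the density of $\mu$ is finite Lebesgue-a.e.\ and $\delta M$ is Lebesgue-null), and Fubini together with symmetry (sub-invariance suffices: $\int p_s(\cdot,\delta M)\,d\mu\leq\mu(\delta M)=0$) does give the identity for $\mu$-almost every starting point; the truncated functionals $h_t$ are also the right vehicle for the transfer step. The gap is exactly at the point you flagged, and it is \emph{not} repaired by what the paper's construction actually proves. The Malliavin/elliptic-regularity argument of Section 3.1 produces densities for $p_t(x,\cdot)$ only on open subsets of $M\cap\{d(\cdot,\delta M)>\varepsilon\}$; letting $\varepsilon\to 0$ therefore only yields $p_t(x,A\setminus\delta M)=0$ for $\mu$-null $A$, and says nothing about the mass $p_t(x,\delta M)$ that the kernel may place on $\delta M$ itself. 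For allowed $x\notin\delta M$ this residual mass can be killed by a supplementary argument (Chapman--Kolmogorov plus sub-invariance gives $p_s(x,\delta M)\leq p_v(x,\delta M)$ for $v<s$, and right-continuity of paths at $0$ with $\BBone_{\delta M}$ upper semicontinuous on $M$ forces the limit to vanish), so your proof can be completed off $\delta M$. But the lemma's scope includes allowed starting points $x\in\delta M$ ($\delta M$ is not polar, so $\delta M\setminus E\neq\emptyset$, and these are precisely the points needed in Section 3.2), and for such $x$ the statement $p_\varepsilon(x,\delta M)=0$ that you need is, after Fubini, essentially a time-marginal restatement of the occupation-time identity you are trying to prove. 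At the crucial starting points your argument is therefore circular: it consumes the lemma (in disguised form) as an input, and no elliptic-regularity argument can supply it, since the coefficients are singular exactly on $\delta M$.

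The paper's own proof avoids transition kernels entirely and is pathwise: cover $\delta M$ by countably many sets $\Omega_y$; while the process is in $\Omega_y$ (say $y^1=y^2$), the computation of Section 2 shows that $\parallel D^{1,2}_t\parallel^2$ dominates a squared Bessel process $U_t$ of dimension in $(0,2)$, which by Revuz--Yor spends zero Lebesgue time at the origin; since $0\leq U_s\leq\parallel D^{1,2}_s\parallel^2$, any time with $D^{1,2}_s=0$ is a time with $U_s=0$, so the occupation time of $\delta M\cap\Omega_y$ vanishes. This works for \emph{every} starting point, including those on $\delta M$, which is exactly the case your approach cannot reach. If you want to salvage your scheme, you would first have to prove $p_t(x,\delta M)=0$ for $x\in\delta M$ by some comparison of this type — at which point the soft machinery becomes superfluous.
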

\begin{proof}
We can cover $\delta M$ by an enumerable union of $\Omega_y$ ($y \in \delta M$). It is thus immediate that the lemma will be proved once we prove that $$\int_0^{+\infty} \, \BBone_{\delta M\cap \Omega_y}(X_s) \, ds \, = \, 0 \, , \quad \mathbb P_x \, \, a.s.$$
But we have seen in the previous section that, when the process is in some $\Omega_y$ (where say $y^1=y^2$), the process $\parallel D_t^{1,2}\parallel^2$ is larger than or equal to the square of a Bessel process $U_t$ of index $\delta$ strictly between $0$ and $2$. But (see \cite{RY} proof of proposition 1.5 p.442), the time spent at the origin by the latter is equal to $0$, i.e. $\int_0^{+\infty} \, \BBone_{U_s=0} \, ds = 0$ almost surely. The same necessarily holds for $D^{1,2}$, hence the result. 
\end{proof}
\medskip

We intend now to prove some uniqueness, when starting from a point in $\delta M$. Actually, using some standard tools of concatenation of paths, it is enough to look at the behavior of our process starting at some $y \in \delta M$  with $y^1=y^2$, up to the exit time of $\Omega_y$ (or some open non empty subset of $\Omega_y$). In this case the only difficulty is to control the pair $(X_.^1,X_.^2)$ since all other coordinates are defined through smooth coefficients. Of course writing $$D_t^{1,2}=X_t^1 - X_t^2 \quad , \quad S_t^{1,2}=X_t^1+X_t^2$$ we have that
\begin{equation}\label{eqD}
dD_t^{1,2}= 2 \, dW_t^1 - \frac{2\chi}{N} \, \frac{D_t^{1,2}}{\parallel D_t^{1,2}\parallel^2} \, dt + b_1(X_t) \, dt
\end{equation}
and
\begin{equation}\label{eqS}
dS_t^{1,2}= 2 \, dW_t^2  + b_2(X_t) \, dt
\end{equation}
where $b_1$ and $b_2$ are smooth functions (in $\Omega_y$), $W^1$ and $W^2$ being two independent $2$ dimensional Brownian motions.

Define $\bar \Omega_y$ as we defined $\Omega_y$ (see \eqref{eqomega}, but replacing $d_y/2$ by $d_y/4$ and consider a smoothed version $\eta$ of the indicator of $\bar \Omega_y$ i.e. a smooth non negative function such that $$\BBone_{\bar \Omega_y} \leq \eta \leq \BBone_{\Omega_y} \, .$$ We may extend all coefficients (except $D/\parallel D\parallel^2$) as smooth compactly supported functions outside $\Omega_y$, and replace $D/\parallel D\parallel^2$ by $\eta(X)\,  D/\parallel D\parallel^2$. If we can show uniqueness for this new system we will have shown uniqueness up to the exit time of $\bar \Omega_y$ for \eqref{eqD}, \eqref{eqS} and the remaining part of the initial system.
\medskip

Hence our problem amounts to the following one: prove uniqueness for $Y=(D,S,\bar X) \in \R^2\times \R^2\times \R^{2(N-2)}$ solution of
\begin{eqnarray}\label{eqnewsys}
dD_t&=& 2 \, dW_t - \frac{2\chi}{N} \, \frac{D_t}{\parallel D_t\parallel^2} \, dt + b(D_t,S_t,\bar X_t) \, dt \, , \nonumber\\ dS_t&=& 2 \, dW'_t + b'(D_t,S_t,\bar X_t) \, dt \, ,\\ d\bar X_t&=& \sqrt 2 \, d\bar B_t + \bar b(D_t,S_t,\bar X_t) \, dt \, , \nonumber
\end{eqnarray}
where $b,b',\bar b$ are smooth and compactly supported in $\mathbb R^{2N}$. 
\medskip

Thus, after a standard Girsanov transform, we are reduced to prove uniqueness for
\begin{eqnarray}\label{eqnewsys2}
dD_t&=& 2 \, dW_t - \frac{2\chi}{N} \, \frac{D_t}{\parallel D_t\parallel^2} \, dt  , \nonumber\\ dS_t&=& 2 \, dW'_t \, ,\\ d\bar X_t&=& \sqrt 2 \, d\bar B_t \, , \nonumber
\end{eqnarray}
hence for $D_.$. $U_.= D_./2$ is some type of $2$-dimensional skew Bessel process with dimension $\chi/2N$ (see \cite{Blei} for the one dimensional version). Its squared norm $|U_.|^2$ is a squared Bessel process of  dimension $\delta = 2 - \frac{\chi}{N}$, so that the origin is not polar for the process $D_.$. 
\smallskip

As we did in the previous sub-section, we can directly prove the existence and uniqueness of a symmetric Hunt process (here the reference measure is $|D|^{-\chi/N} \, dD$) using the associated Dirichlet form, and since the origin is not polar, we know the existence of a solution starting from $D_0=0$. Here we only need $\chi < N$, but for the whole construction our initial assumption on $\chi$ is required. Finally we can check that the occupation time formula of Lemma \ref{lemlocal} is still true.
\medskip

But as before, if now we have existence starting from every initial point, we only have uniqueness in the sense of symmetric Markov processes. To get weak uniqueness we can use polar coordinates: the squared norm is a squared Bessel process, so that strong uniqueness holds (with the corresponding dimension we are looking at); the polar angle is much tricky to handle. This is the main goal of Lemma 19 in \cite{FJ}, and the final weak uniqueness then follows from the proof of Theorem 17 in \cite{FJ} and the occupation time formula.
\medskip

\begin{remark}\label{remfin}
It can be noticed than this result is out of reach of the method developed by Krylov and R\"{o}ckner in \cite{KR} for a general Brownian motion plus drift $b$, since it requires that $b \in \L^p(dX)$ for some $p>2$. Also notice that one cannot use standard Girsanov transform for solving \eqref{eqnewsys2}, since for a $2$-dimensional Brownian motion starting from the origin, $$\int_0^t \, \frac{1}{|B_s|^2} \, ds = +\infty  \quad a.s. \quad \textrm{for all $t>0$} \, ,$$ see \cite{RY}. \hfill $\diamondsuit$
\end{remark}
\bigskip

\end{document}